\documentclass[a4paper,11pt,twoside,notitlepage]{amsart}

\usepackage{amsmath,amssymb,amsthm,mathtools,mathrsfs}
\usepackage[shortlabels]{enumitem}
\usepackage[ocgcolorlinks,linkcolor=blue,citecolor=blue,urlcolor=blue]{hyperref}
\usepackage{url}
\numberwithin{equation}{section}
\mathtoolsset{showonlyrefs}
\allowdisplaybreaks

\newtheorem{theorem}{Theorem}[section]
\newtheorem{proposition}[theorem]{Proposition}
\newtheorem{lemma}[theorem]{Lemma}
\newtheorem{corollary}[theorem]{Corollary}
\newtheorem{definition}[theorem]{Definition}
\newtheorem{remark}[theorem]{Remark}
\newtheorem{question}{Question}[section]

\DeclareMathOperator{\diam}{diam}
\DeclareMathOperator{\Id}{Id}

\newcommand{\R}{\mathbb R}

\newcommand{\eps}{\varepsilon}

\newcommand{\Dinfty}{\Delta_\infty}
\newcommand{\para}[1]{\vspace{3mm}\noindent\textbf{#1.}}
\newcommand{\Xray}{\mathcal X}

\title[Calder\'on problem for the infinity Laplacian]{The Calder\'on problem for the infinity Laplacian with large affine boundary data}

\author[Y.-H. Lin]{Yi-Hsuan Lin}
\address{Department of Applied Mathematics, National Yang Ming Chiao Tung University, Hsinchu, Taiwan \& Fakult\"at f\"ur Mathematik, University of Duisburg-Essen, Essen, Germany}
\email{yihsuanlin3@gmail.com}

\keywords{Calder\'on problem, infinity Laplacian, quasilinear degenerate elliptic equations, viscosity solutions, Dirichlet-to-Neumann map, large boundary data, X-ray transform}
\subjclass[2020]{35R30, 35J94, 35D40, 49L25, 44A12}

\hypersetup{
	pdftitle={The Calderon problem for the infinity Laplacian with large affine boundary data},
	pdfauthor={Yi-Hsuan Lin},
	pdfsubject={The Calderon problem for a quasilinear degenerate elliptic infinity Laplacian equation},
	pdfkeywords={Calderon problem, infinity Laplacian, quasilinear degenerate elliptic equation, Dirichlet-to-Neumann map, viscosity solutions, X-ray transform}
}

\begin{document}

\begin{abstract}
	We study the Calder\'on problem for the equation $-\Delta_\infty u+q(x)u=0$ in a bounded convex domain with $C^2$ boundary. We prove that a positive potential $q$ is uniquely determined and explicitly reconstructible from the measurements $\Lambda_q\bigl(t(e\cdot x+b)|_{\partial\Omega}\bigr)$, where $e\in\mathbb S^{n-1}$, the offset $b>\sup_{\overline\Omega}|x|$ is fixed, and $t\to\infty$. The first potential-dependent term in the large amplitude asymptotics determines weighted integrals of $q$ over the chords parallel to $e$. Combining the measurements in the directions $e$ and $-e$ gives the X-ray transform of the zero extension of $q$, and the Fourier slice identity yields reconstruction and uniqueness.
\end{abstract}

\maketitle
\tableofcontents

\section{Introduction}

The infinity Laplacian is the second-order operator
\begin{equation}\label{eq:Dinfty-intro}
	\Dinfty u=\langle D^2u\,Du,Du\rangle=\sum_{i,j=1}^n u_{x_i}u_{x_j}u_{x_ix_j}.
\end{equation}
For the equation considered in this paper, define
\begin{equation}\label{eq:F-intro}
	F_q(x,r,p,X)=-p^TXp+q(x)r.
\end{equation}
If $X\le Y$ in the order of symmetric matrices, then $F_q(x,r,p,X)\ge F_q(x,r,p,Y)$. Thus, $F_q$ is degenerate elliptic in the viscosity convention. Its principal coefficient matrix is $p\otimes p$. When $p\ne0$, this matrix has eigenvalues $|p|^2,0,\ldots,0$, while it vanishes when $p=0$. The equation is therefore not uniformly elliptic. Since \eqref{eq:F-intro} is linear in the Hessian variable, with coefficients depending quadratically on the gradient, it is a quasilinear equation of nondivergence form. The zeroth-order term does not affect ellipticity. If $q\ge c_0>0$, then $F_q$ is strictly increasing in the solution variable, which gives the comparison principle used below.

The infinity Laplacian arose in Aronsson's study of absolutely minimizing Lipschitz extensions and supremal variational problems \cite{Aronsson1967,Aronsson1968}. The homogeneous equation $\Dinfty u=0$ is naturally understood in the viscosity sense. Its relation to absolutely minimizing functions, comparison with cones, and limits of $p$-harmonic equations was developed in \cite{Jensen1993,CrandallEvansGariepy2001,AronssonCrandallJuutinen2004}; see also \cite{PeresSchrammSheffieldWilson2009,Lindqvist2014}. At every point where $Du\ne0$, writing $\widehat{Du}=Du/|Du|$, one has $\Dinfty u=|Du|^2D^2u[\widehat{Du},\widehat{Du}]$. Thus, the operator records the second derivative only in the gradient direction. This rank-one structure is the main feature used in the inverse problem.

The infinity Laplacian also appears as a nonlinear interpolation operator. The absolutely minimizing Lipschitz extension selects an extension by controlling its largest local slope, a principle used in image interpolation and restoration \cite{CasellesMorelSbert1998}. On weighted graphs, the same principle leads to Lipschitz learning for semi-supervised classification, whose continuum limit is governed by infinity Laplace type equations \cite{Calder2019Lipschitz}. The tug-of-war representation gives a stochastic dynamic-programming interpretation of the equation \cite{PeresSchrammSheffieldWilson2009}. In the problem considered here, $q$ is a spatially varying lower-order coefficient, and the question is whether it can be determined from boundary inputs and the corresponding normal derivatives.

Let $\Omega\subset\R^n$ be a bounded domain, let $q$ be a zeroth-order coefficient, and let $f\in C(\partial\Omega)$. We consider
\begin{equation}\label{eq:forward-intro}
	\begin{cases}
		-\Dinfty u+q(x)u=0 & \text{in }\Omega,\\
		u=f & \text{on }\partial\Omega.
	\end{cases}
\end{equation}
If $q\in C(\overline\Omega)$ and $q\ge c_0>0$, then \eqref{eq:forward-intro} has a unique viscosity solution $u_f^q\in C(\overline\Omega)\cap\operatorname{Lip}_{\mathrm{loc}}(\Omega)$. The boundary condition is attained pointwise, while the local Lipschitz regularity follows by writing the equation as $\Dinfty u_f^q=q u_f^q$ and applying the regularity theory for bounded inhomogeneities. The viscosity formulation itself only requires continuity, and the interior Lipschitz regularity does not by itself provide a normal derivative on $\partial\Omega$. A separate boundary regularity result is therefore needed in order to define the boundary measurements.

Suppose that $\partial\Omega$ is of class $C^1$ and that $f\in C^1(\partial\Omega)$. Since $u_f^q$ satisfies $\Dinfty u_f^q=h$ with $h(x)=q(x)u_f^q(x)$ bounded and continuous, the boundary differentiability theorem for inhomogeneous infinity Laplace equations implies that $u_f^q$ is differentiable at every point $x_0\in\partial\Omega$. More precisely, there exists a vector $Du_f^q(x_0)\in\R^n$ such that
\[
u_f^q(x)=u_f^q(x_0)+Du_f^q(x_0)\cdot(x-x_0)+o(|x-x_0|)
\]
as $x\to x_0$ within $\overline\Omega$. In particular, the outward normal derivative $\partial_\nu u_f^q(x_0)=Du_f^q(x_0)\cdot\nu(x_0)$ is well defined at every boundary point. We may therefore define the DN map by
\begin{equation}\label{eq:DN-intro}
	\Lambda_q(f)(x)=\partial_\nu u_f^q(x),\quad x\in\partial\Omega,
\end{equation}
where $\nu$ denotes the outward unit normal. The same boundary differentiability will be used later to convert the linear vanishing of the chord-barrier error at the endpoints into the large amplitude asymptotics of the normal derivatives.

The Calder\'on problem is to determine $q$ from $\Lambda_q$. We show that the values of $\Lambda_q(f)$ for arbitrary boundary functions are not needed. Let $R_\Omega=\sup_{x\in\overline\Omega}|x|$, fix $b>R_\Omega$, and set $\ell_{e,b}(x)=e\cdot x+b$ for $e\in\mathbb S^{n-1}$. Then $\ell_{e,b}>0$ on $\overline\Omega$. The measurements used in this paper are $\Lambda_q\bigl(t\ell_{e,b}|_{\partial\Omega}\bigr)$, where $e\in\mathbb S^{n-1}$ and $t$ is sufficiently large. For each fixed $e$, the boundary values therefore lie on the one-parameter ray $\{t\ell_{e,b}|_{\partial\Omega}:t>0\}$.

\begin{question}\label{Q:inverse}
	Do the measurements $\Lambda_q\bigl(t\ell_{e,b}|_{\partial\Omega}\bigr)$, with $e\in\mathbb S^{n-1}$ and $t$ sufficiently large, uniquely determine $q$?
\end{question}

Our main result gives an affirmative answer. The first potential-dependent term in the large amplitude measurements determines the X-ray transform of the zero extension of $q$, from which both uniqueness and reconstruction follow.

\para{Earlier literature}
The classical Calder\'on problem asks whether an isotropic conductivity can be recovered from its Dirichlet-to-Neumann map \cite{calderon}. In dimensions $n\ge3$, global uniqueness was proved by Sylvester and Uhlmann using complex geometric optics solutions \cite{sylvester1987global}. Their work initiated a broad development of inverse boundary value problems for linear elliptic equations. For nonlinear equations, the dependence of the boundary measurements on the boundary values provides information that is absent from a single linearized equation.

A standard approach to nonlinear inverse problems is to differentiate the nonlinear Dirichlet-to-Neumann map with respect to the boundary data. Early applications include Isakov's work on semilinear parabolic equations \cite{Isa93}, the determination of nonlinear conductivities by Kang and Nakamura \cite{KN02}, and the elliptic results of Sun \cite{Sun96,Sun10}. The higher-order linearization method, based on interactions of several linearized solutions, was introduced in a geometric hyperbolic setting in \cite{KLU2018} and later developed for semilinear elliptic equations in \cite{FO20,LLLS2019nonlinear}. Related results include partial data problems and nonlinearities with noninteger powers \cite{LLLS2019partial,LLST2022inverse}.

Quasilinear elliptic inverse problems have been studied using first and higher-order linearization, complex geometric optics solutions, and boundary determination; see \cite{CNV2019reconstruction,CFKKU2021calderon,KKU2022partial} and the references therein. Geometric quasilinear problems include the recovery of Riemannian metrics and coefficients from minimal area or minimal surface data \cite{ABN20,CLLT24,Nur24}. We refer to \cite{lassas2025introduction} for a broader introduction to nonlinear inverse problems.

The argument is also related to inverse source problems. For a linear equation, a compactly supported perturbation of a solution may change the source without changing the boundary Cauchy data. Nonlinear effects can remove this obstruction, as shown for semilinear elliptic and reaction-diffusion equations in \cite{LL24_elliptic_source,KLL_reaction_source}. More recent results concern inverse source problems for the Monge--Amp\`ere equation and for broader classes of admissible fully nonlinear equations \cite{LL2025IP_Monge_Ampere,LLW26_fully_nonlinear,Lin2026gauge}. Large boundary data give a mechanism different from higher-order linearization. In several fully nonlinear problems, a rank-deficient leading profile forces the first source-dependent correction to satisfy lower-dimensional equations, leading to chord or affine section tomography \cite{CG26_large,Lin2026KHessian}. The Hessian equations in those works are fully nonlinear in the second derivatives. The equation considered here is quasilinear and degenerate elliptic, but its large affine limit produces a related rank-one equation.

The forward theory for inhomogeneous infinity Laplace equations was developed in \cite{LuWang2008,BhattacharyaMohammed2012}. The strong degeneracy of the operator makes regularity substantially different from the uniformly elliptic theory; characteristic and adjoint methods were developed in \cite{EvansYu2005,EvansSmart2011Differentiability,EvansSmart2011Adjoint}. The boundary differentiability theorem used in \eqref{eq:DN-intro} is due to Hong \cite{Hong2014}. Inverse boundary value problems for the finite $p$-Laplacian have also been studied, for example in \cite{SaloZhong2012}. The limit $p\to\infty$ does not directly give the argument used here, since the first correction around an affine infinity harmonic function is governed by a rank-one equation rather than a uniformly elliptic one.

\para{Difficulties and ideas of the proof}
The proof has three main difficulties. First, solutions of \eqref{eq:forward-intro} are viscosity solutions, and the equation cannot be differentiated through a formal classical expansion. Second, the infinity Laplacian is not in divergence form, so the usual Alessandrini identity is not available. Third, the equation governing the first large amplitude correction contains second derivatives in only one direction.

The different homogeneities of the two terms provide the starting point. Since $\Dinfty(tu)=t^3\Dinfty u$, uniqueness for the forward problem gives the exact identity $u_{tf}^q=t u_f^{q/t^2}$. Thus, increasing the amplitude of the boundary value for a fixed potential is equivalent to keeping the boundary value fixed and replacing $q$ by $t^{-2}q$. This reduction is specific to the linear potential $q(x)u$. For the equation with $q(x)u^3$, both terms have degree three under $u\mapsto tu$, so the same normalization leaves the potential unchanged. 

We apply this identity to the affine infinity harmonic function
\begin{equation}\label{eq:affine_BC}
	\ell_{e,b}(x)=e\cdot x+b.
\end{equation} 
Let $\eps=t^{-2}$ and set $v_\eps=t^{-1}u_{t\ell_{e,b}}^q$, then $v_\eps$ is the unique viscosity solution of
\begin{equation}\label{eq:rescaled-intro}
	\begin{cases}
		-\Dinfty v_\eps+\eps q(x)v_\eps=0 & \text{in }\Omega,\\
		v_\eps=\ell_{e,b} & \text{on }\partial\Omega.
	\end{cases}
\end{equation}
Define
\begin{equation}\label{eq:intro-expansion}
	z_\eps=\frac{v_\eps-\ell_{e,b}}{\eps},\quad\text{equivalently}\quad \frac{u_{t\ell_{e,b}}^q}{t}=\ell_{e,b}+\eps z_\eps.
\end{equation}
This is an exact decomposition. Since $v_\eps$ and $\ell_{e,b}$ have the same boundary trace, $z_\eps=0$ on $\partial\Omega$.

Substituting $v_\eps=\ell_{e,b}+\eps z_\eps$ into \eqref{eq:rescaled-intro} and dividing by $\eps$ gives
\begin{equation}\label{eq:z-exact-intro}
	-(e+\eps Dz_\eps)^TD^2z_\eps(e+\eps Dz_\eps)+q(x)(\ell_{e,b}+\eps z_\eps)=0\quad\text{in }\Omega,
\end{equation}
where the equation is understood in the viscosity sense. Lemma~\ref{lem:z-equation} proves this change of variables directly with test functions. No differentiability of the solution with respect to $\eps$ is assumed.

The limiting equation is
\begin{equation}\label{eq:rank-one-intro}
	-\partial_e^2w+q(x)(e\cdot x+b)=0,
\end{equation}
where $\partial_e^2w=D^2w[e,e]$. On each line parallel to $e$, this is a one-dimensional equation. We define $w$ by solving \eqref{eq:rank-one-intro} on each chord of $\Omega$ with zero endpoint values and prove, by explicit barriers, that $z_\eps$ converges to $w$ on every fixed positive length chord.

The exact identity \eqref{eq:intro-expansion} can also be written as $u_{t\ell_{e,b}}^q=t\ell_{e,b}+t^{-1}z_{t^{-2}}$. Taking the normal derivative gives
\[
\Lambda_q\bigl(t\ell_{e,b}|_{\partial\Omega}\bigr)=t\,e\cdot\nu+t^{-1}\partial_\nu z_{t^{-2}}.
\]
Thus, the first potential-dependent term in the DN measurements is determined by the boundary behavior of $z_\eps$, and it is necessary to control the normal derivatives of $z_\eps$, rather than only its values in the interior.

To obtain this boundary information, we use the chord notation introduced below. For $y\in Y_e$, the line $y+\R e$ meets $\Omega$ in the interval $\{y+se:\alpha_e(y)<s<\beta_e(y)\}$. A global slab barrier first gives an $\eps$-independent bound for $z_\eps$; see Proposition~\ref{prop:global-bound}. For a fixed $y_0\in Y_e$, Lemma~\ref{lem:tube-barrier} constructs local barriers in a small tube around the chord through $y_0$ and proves the central chord estimate \eqref{eq:central-chord-estimate}. The factor on the right-hand side of that estimate vanishes linearly at both endpoints of the chord.

Proposition~\ref{prop:endpoint-derivative} applies one-sided difference quotients to \eqref{eq:central-chord-estimate} and obtains an $O(\eps)$ estimate for the directional derivatives $\partial_ez_\eps-\partial_ew_{q,e,b}$ at the two endpoints. Since both functions vanish on the corresponding boundary faces, their gradients are normal to $\partial\Omega$ there. At a non-glancing endpoint one has $e\cdot\nu\ne0$, so the identity $\partial_e=(e\cdot\nu)\partial_\nu$ converts the directional derivative estimate into an estimate for $\partial_\nu z_\eps-\partial_\nu w_{q,e,b}$. Proposition~\ref{prop:local-uniformity} makes this estimate uniform on compact subsets of $\Gamma_e$. The argument does not require an interior $C^1$ estimate or a nonvanishing condition on the gradient.

Finally, integrating \eqref{eq:rank-one-intro} along a chord expresses a weighted integral of $q$ in terms of the two endpoint values of $\partial_\nu w$. Repeating the construction in the direction $-e$ removes the affine weight. The resulting quantities give the X-ray transform of the zero extension of $q$, and the Fourier slice identity yields reconstruction and uniqueness.

\para{Mathematical formulation and main results}
Throughout the paper, $n\ge2$, $\Omega\subset\R^n$ is a bounded convex domain with $C^2$ boundary, and
\begin{equation}\label{eq:q-assumptions-intro}
	q\in C^2(\overline\Omega),\quad q(x)\ge c_0>0.
\end{equation}
The positivity of $q$ gives comparison and uniqueness for the forward problem. The $C^2$ regularity is used in the local chord construction, while the forward theory requires less.

For $e\in\mathbb S^{n-1}$, let $P_e=\Id-e\otimes e$ and $Y_e=P_e\Omega\subset e^\perp$. For each $y\in Y_e$, convexity gives numbers $\alpha_e(y)<\beta_e(y)$ such that
\begin{equation}\label{eq:chord-intro}
	\Omega\cap(y+\R e)=\{y+se:\, \alpha_e(y)<s<\beta_e(y)\}.
\end{equation}
We write $x_e^-(y)=y+\alpha_e(y)e$ and $x_e^+(y)=y+\beta_e(y)e$. These endpoints are non-glancing: $e\cdot\nu(x_e^+(y))>0$ and $e\cdot\nu(x_e^-(y))<0$. We also set
\begin{equation}\label{eq:Gamma-intro}
	\Gamma_e=\{x\in\partial\Omega:\, e\cdot\nu(x)\ne0\}.
\end{equation}
The first theorem identifies the first potential-dependent term in the large amplitude measurements.

\begin{theorem}[large amplitude DN asymptotics]\label{thm:asymp-intro}
	Assume \eqref{eq:q-assumptions-intro} and fix $b>R_\Omega$. For $e\in\mathbb S^{n-1}$ and $t>0$, let $u_{t,e,b}^q\in C(\overline{\Omega})$ be the unique viscosity solution of
	\begin{equation}\label{eq:large-forward-intro}
		\begin{cases}
			-\Dinfty u_{t,e,b}^q+q(x)u_{t,e,b}^q=0 & \text{in }\Omega,\\
			u_{t,e,b}^q=t\ell_{e,b} & \text{on }\partial\Omega,
		\end{cases}
	\end{equation}
	where $\ell_{e,b}$ is the affine function defined by \eqref{eq:affine_BC}. Then, for every $x\in\Gamma_e$, the limit
	\begin{equation}\label{eq:H-intro}
		H_q(e,b;x)=\lim_{t\to\infty}t\left[\Lambda_q\bigl(t\ell_{e,b}|_{\partial\Omega}\bigr)(x)-t\,e\cdot\nu(x)\right]
	\end{equation}
	exists, locally uniformly on $\Gamma_e$.
	
	For each $y\in Y_e$, let $w_{q,e,b}$ be the unique solution of
	\begin{equation}\label{eq:w-intro}
		\begin{cases}
			\partial_s^2w_{q,e,b}(y,s)=q(y+se)(s+b) & \text{for }\alpha_e(y)<s<\beta_e(y),\\
			w_{q,e,b}(y,s)=0 & \text{for }s=\alpha_e(y)\text{ and }s=\beta_e(y).
		\end{cases}
	\end{equation}
	The function $w_{q,e,b}$ is of class $C^2$ on every closed chord tube whose transverse base is compactly contained in $Y_e$. In particular, it is $C^2$ up to every point of $\Gamma_e$, and
	\begin{equation}\label{eq:H-equals-normal-intro}
		H_q(e,b;x)=\partial_\nu w_{q,e,b}(x),\quad x\in\Gamma_e.
	\end{equation}
	Moreover, for every compact set $K\Subset\Gamma_e$, there are constants $C_K>0$ and $t_K>0$ such that
	\begin{equation}\label{eq:DN-rate-intro}
		\sup_{x\in K}\big|\Lambda_q\bigl(t\ell_{e,b}|_{\partial\Omega}\bigr)(x)-t\,e\cdot\nu(x)-t^{-1}\partial_\nu w_{q,e,b}(x)\big|\le C_Kt^{-3}
	\end{equation}
	for $t\ge t_K$.
\end{theorem}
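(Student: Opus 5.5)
The plan follows the outline in the introduction. By homogeneity $\Dinfty(tu)=t^3\Dinfty u$ and uniqueness for the forward problem, $u^q_{t,e,b}=t\,v_\eps$ with $\eps=t^{-2}$, where $v_\eps$ solves \eqref{eq:rescaled-intro}. Writing $v_\eps=\ell_{e,b}+\eps z_\eps$ gives $\Lambda_q(t\ell_{e,b}|_{\partial\Omega})=t\,e\cdot\nu+t^{-1}\partial_\nu z_\eps$. Here $\partial_\nu z_\eps$ exists at every boundary point by Hong's boundary differentiability theorem, since $z_\eps=(v_\eps-\ell_{e,b})/\eps$ and $\ell_{e,b}$ is smooth. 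The estimate \eqref{eq:DN-rate-intro} is therefore equivalent to
\[
\sup_K|\partial_\nu z_\eps-\partial_\nu w_{q,e,b}|\le C_K\eps
\]
for $\eps$ small, because $t^{-1}\cdot C\eps=Ct^{-3}$. The existence and local uniformity of the limit \eqref{eq:H-intro}, together with \eqref{eq:H-equals-normal-intro}, then follow by multiplying by $t$.

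\textbf{Step 1: regularity of $w_{q,e,b}$.} Choose coordinates $x=y+se$. Explicitly,
\[
w(y,s)=-\int_{\alpha}^{\beta}G_{\alpha,\beta}(s,\sigma)\,q(y+\sigma e)(\sigma+b)\,d\sigma,
\]
with the Dirichlet Green function of $\partial_s^2$ on $(\alpha,\beta)$. On a transverse base $B\Subset Y_e$ the endpoints are non-glancing. Hence the implicit function theorem applied to the $C^2$ defining function of $\partial\Omega$ makes $\alpha_e,\beta_e$ of class $C^2$ on $B$, with $\beta_e-\alpha_e$ bounded below. Rescaling $s=\alpha+(\beta-\alpha)\tau$ to $\tau\in[0,1]$ and using $q\in C^2$ shows that $w$ is $C^2$ on the closed tube. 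At an endpoint, $w=0$ on the boundary face, so $Dw$ is normal there and $\partial_e w=(e\cdot\nu)\partial_\nu w$.

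\textbf{Step 2: $z_\eps$ is close to $w$ up to the boundary.} We use Lemma~\ref{lem:z-equation} for the viscosity equation \eqref{eq:z-exact-intro} and Proposition~\ref{prop:global-bound} for the uniform bound $|z_\eps|\le C$. Around the chord through $y_0$, the tube barrier of Lemma~\ref{lem:tube-barrier} compares $z_\eps$ with $w\pm\eps\,\phi$. Here $\phi$ is modeled on $C(s-\alpha_e(y))(\beta_e(y)-s)$ plus a transverse penalty $A|y-y_0|^2$. This penalty dominates the tube's lateral boundary through the global bound, and it absorbs the $O(\eps)$ error terms in \eqref{eq:z-exact-intro}, namely $\eps q z_\eps$ and the gradient perturbation $\eps Dz$ acting on $D^2$. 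The outcome is the central chord estimate \eqref{eq:central-chord-estimate}:
\[
|z_\eps(y_0+se)-w(y_0,s)|\le C\eps\,(s-\alpha)(\beta-s).
\]

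\textbf{Step 3: normal derivatives at the endpoints.} Since $z_\eps$ and $w$ both vanish at $x^\pm_e(y_0)$, one-sided difference quotients of the central chord estimate give $|\partial_e z_\eps-\partial_e w|\le C\eps(\beta-\alpha)$ at each endpoint (Proposition~\ref{prop:endpoint-derivative}). This uses the existence of the boundary derivative of $z_\eps$ from Hong's theorem. Since $z_\eps=0$ on $\partial\Omega$, its gradient is normal, so $\partial_e=(e\cdot\nu)\partial_\nu$ with $|e\cdot\nu|\ge c_K>0$ on $K$. This converts the bound into $|\partial_\nu z_\eps-\partial_\nu w|\le C\eps$. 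Finally, the barrier constants depend only on the $C^2$ norms of $\alpha_e,\beta_e,q$ on a neighbourhood base and on a lower bound for the chord length, so they are uniform for $y_0$ in a compact set. Covering $K$ by finitely many such sets gives uniformity (Proposition~\ref{prop:local-uniformity}).

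\textbf{Main obstacle.} The hard step is Step~2. The perturbed operator $(e+\eps Dz)^TD^2(\cdot)(e+\eps Dz)$ depends on the unknown gradient $Dz_\eps$, which is controlled only in the viscosity sense. The barrier must therefore be a strict super/subsolution for every admissible gradient of a test function touching it. I would first build the barrier so that its Hessian in $e$ has a definite sign with margin $\gtrsim1$, while the transverse Hessian is only $O(A)$. The cross terms then contribute $O(\eps|D\phi|^2A)$, which is small for $\eps\le\eps_0(A)$.
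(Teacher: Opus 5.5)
Your proposal is correct and follows the paper's proof essentially step for step. The shared steps are the scaling $u^q_{t,e,b}=t v_\eps$ with $\eps=t^{-2}$, the exact viscosity equation for $z_\eps$, the slab bound, the chord-tube barrier giving the central chord estimate, one-sided difference quotients at the endpoints via Hong's boundary differentiability, and the conversion $\partial_e=(e\cdot\nu)\partial_\nu$ with constants uniform over $P_e(K)\Subset Y_e$.

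One correction to Step~2: the transverse penalty must enter at order one, as in the paper's barrier $w\pm K_0|y-y_0|^2\pm A\eps\chi$. It cannot sit inside $\eps\phi$ with an $\eps$-independent $A$, because on the lateral boundary only $|z_\eps-w|=O(1)$ is known. It is the term $A\eps\chi$, with $\partial_e^2\chi=-2$, that absorbs the $O(\eps)$ errors, not the penalty. The penalty does not enter $-\partial_e^2$ and vanishes on the central chord, so it does not affect the $O(\eps)$ bound there.
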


\begin{theorem}[Restricted data uniqueness]\label{thm:main}
	Let $q_1,q_2\in C^2(\overline\Omega)$ satisfy $q_j\ge c_0>0$, and fix $b>R_\Omega$. Suppose that, for every $e\in\mathbb S^{n-1}$, there is $t_e>0$ such that
	\begin{equation}\label{eq:DN-equality-intro}
		\Lambda_{q_1}\bigl(t\ell_{e,b}|_{\partial\Omega}\bigr)=\Lambda_{q_2}\bigl(t\ell_{e,b}|_{\partial\Omega}\bigr)\quad\text{on }\partial\Omega
	\end{equation}
	for all $t\ge t_e$. Then $q_1=q_2$ in $\Omega$.
\end{theorem}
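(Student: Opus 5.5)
We deduce Theorem~\ref{thm:main} from Theorem~\ref{thm:asymp-intro}. First, equality of the measurements for $t\ge t_e$ lets us compute the limit \eqref{eq:H-intro} with the same data for both potentials. Hence $H_{q_1}(e,b;x)=H_{q_2}(e,b;x)$ for every $x\in\Gamma_e$. By \eqref{eq:H-equals-normal-intro}, $\partial_\nu w_{q_1,e,b}=\partial_\nu w_{q_2,e,b}$ on $\Gamma_e$.

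Next we convert these normal derivatives into chord integrals. Fix $y\in Y_e$. Both chord endpoints $x_e^\pm(y)$ lie in $\Gamma_e$, since they are non-glancing. The function $w_{q,e,b}$ is $C^2$ up to them and vanishes on $\partial\Omega$ near them, so its gradient there is normal and $\partial_s w=(e\cdot\nu)\partial_\nu w$. Integrating \eqref{eq:w-intro} over $(\alpha_e(y),\beta_e(y))$ gives
\[
\int_{\alpha_e(y)}^{\beta_e(y)} q(y+se)(s+b)\,ds=(e\cdot\nu)\partial_\nu w_{q,e,b}\big|_{x_e^+(y)}-(e\cdot\nu)\partial_\nu w_{q,e,b}\big|_{x_e^-(y)}.
\]
Therefore the weighted chord integrals $I_e(y)=\int q(y+se)(s+b)\,ds$ coincide for $q_1$ and $q_2$, for every $y\in Y_e$.

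We then remove the affine weight. Apply the same reasoning with $-e$ in place of $e$. The chord through $y$ is the same set, parametrized by $s'=-s$, and the weight becomes $(-s)\cdot$ with sign conventions giving $\int q(y+se)(b-s)\,ds$. Adding this to the first identity yields
\[
2b\int_{\alpha_e(y)}^{\beta_e(y)} q(y+se)\,ds .
\]
Since $b>0$, the X-ray transform of the zero extension $\tilde q_j$ along $y+\R e$ coincides for $j=1,2$ whenever $y\in Y_e$. For $y\in e^\perp\setminus Y_e$ the line misses $\Omega$, and both transforms vanish. The chords are the full intersections because $\Omega$ is convex. Hence $\Xray\tilde q_1=\Xray\tilde q_2$ on all lines.

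Finally, let $g=\tilde q_1-\tilde q_2$, which is bounded and compactly supported. The Fourier slice identity gives $\hat g(\xi)=\int_{e^\perp}\Xray g(e,y)e^{-iy\cdot\xi}\,dy$ for $\xi\perp e$. Every $\xi\in\R^n$ is orthogonal to some $e$ because $n\ge2$, so $\hat g\equiv0$, hence $g=0$ a.e., and $q_1=q_2$ in $\Omega$ by continuity.

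The main point needing care is the endpoint relation $\partial_s w=(e\cdot\nu)\partial_\nu w$, which requires $w$ to be $C^1$ up to the boundary points. Theorem~\ref{thm:asymp-intro} supplies exactly this regularity on $\Gamma_e$. Since every $y\in Y_e$ lies in the open set $Y_e$, its chord sits in a tube with base compactly contained in $Y_e$, so the regularity applies at every chord.
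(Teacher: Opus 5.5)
Your proposal is correct and follows the paper's proof essentially step for step. You obtain equality of the coefficients $H_{q_1}=H_{q_2}$ on $\Gamma_e$ and convert the endpoint normal derivatives of $w_{q,e,b}$ into weighted chord integrals (the paper's Lemma~\ref{lem:endpoint-identity}). You then cancel the affine weight by combining $e$ and $-e$ (Lemma~\ref{lem:remove-weight}) and conclude by injectivity of the X-ray transform via the Fourier slice identity.
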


We next describe the reconstruction formula provided by the proof. Let $Q=q\mathbf 1_\Omega$ be the zero extension of $q$ to $\R^n$. For $e\in\mathbb S^{n-1}$ and $y\in e^\perp$, define the \emph{X-ray transform} of $Q$ by
\begin{equation}\label{eq:Xray-intro}
	\Xray Q(e,y):=\int_\R Q(y+se)\,ds.
\end{equation}
If $y\in Y_e$, this is the integral of $q$ over the chord $\Omega\cap(y+\R e)$. If $y\notin Y_e$, the line $y+\R e$ does not meet the open set $\Omega$, and hence, $\Xray Q(e,y)=0$.

We define
\begin{equation}\label{eq:I-intro}
		I_q(e,b;y):=\begin{cases}
		(e\cdot\nu(x_e^+(y)))H_q(e,b;x_e^+(y))-(e\cdot\nu(x_e^-(y)))H_q(e,b;x_e^-(y)) &\text{ for }y\in Y_e, \\
		0 &\text{ for }y\notin Y_e.
		\end{cases}
\end{equation}
To see how the endpoint data determine a chord integral, fix $y\in Y_e$ and write $\alpha=\alpha_e(y)$, $\beta=\beta_e(y)$, $x_-=y+\alpha e$, and $x_+=y+\beta e$. By \eqref{eq:w-intro}, the function $W_y(s)=w_{q,e,b}(y+se)$ satisfies
\[
W_y''(s)=q(y+se)(s+b),\quad \alpha<s<\beta.
\]
Integrating this equation gives
\[
\int_\alpha^\beta q(y+se)(s+b)\,ds=W_y'(\beta)-W_y'(\alpha)=\partial_e w_{q,e,b}(x_+)-\partial_e w_{q,e,b}(x_-).
\]
Since $w_{q,e,b}$ vanishes on $\partial\Omega$ near both endpoints, its tangential derivatives vanish there, then we have $\partial_e w_{q,e,b}(x_\pm)=(e\cdot\nu(x_\pm))\partial_\nu w_{q,e,b}(x_\pm)$. By \eqref{eq:H-equals-normal-intro} in Theorem~\ref{thm:asymp-intro}, the definition of $I_q$ gives $I_q(e,b;y)=\int_{\alpha_e(y)}^{\beta_e(y)}q(y+se)(s+b)\,ds$. The same chord can be parametrized in the opposite direction $-e$. Since $P_{-e}=P_e$, the transverse parameter $y$ remains unchanged, while $\alpha_{-e}(y)=-\beta_e(y)$ and $\beta_{-e}(y)=-\alpha_e(y)$. Applying the preceding identity in the direction $-e$ and reversing the chord parameter gives $I_q(-e,b;y)=\int_{\alpha_e(y)}^{\beta_e(y)}q(y+se)(b-s)\,ds$. Adding this identity to the corresponding formula for $I_q(e,b;y)$ cancels the term linear in $s$ and gives
\[
I_q(e,b;y)+I_q(-e,b;y)
=2b\int_{\alpha_e(y)}^{\beta_e(y)}q(y+se)\,ds.
\]
The integral on the right-hand side is the X-ray transform of the zero extension $Q=q\mathbf 1_\Omega$ along the line $y+\R e$. Thus, the large amplitude DN measurements determine $\Xray Q(e,y)$ for every $e\in\mathbb S^{n-1}$ and $y\in e^\perp$.

The Fourier slice identity then recovers the Fourier transform of $Q$. More precisely, if $\xi\in e^\perp$, then
\[
\widehat Q(\xi)
=\int_{e^\perp}e^{-\mathsf i y\cdot\xi}\Xray Q(e,y)\,dy.
\]
For every $\xi\ne0$, one may choose a direction $e_\xi\in\mathbb S^{n-1}$ satisfying $e_\xi\cdot\xi=0$. Substitution of the recovered X-ray transform into the Fourier slice identity therefore determines $\widehat Q(\xi)$. We write $\mathcal F^{-1}$ for the inverse Fourier transform.

\begin{corollary}[Reconstruction formula]\label{cor:reconstruction-intro}
	Under the assumptions of Theorem~\ref{thm:asymp-intro}, one has
	\begin{equation}\label{eq:Xray-reconstruction-intro}
		\Xray Q(e,y)=\frac{I_q(e,b;y)+I_q(-e,b;y)}{2b}
	\end{equation}
	for every $e\in\mathbb S^{n-1}$ and $y\in e^\perp$.
	
	For $\xi\ne0$, choose any $e_\xi\in\mathbb S^{n-1}$ satisfying $e_\xi\cdot\xi=0$. With the Fourier transform convention $\widehat Q(\xi)=\int_{\R^n}e^{-\mathsf i x\cdot\xi}Q(x)\,dx$, one has
	\begin{equation}\label{eq:Fourier-reconstruction-intro}
		\widehat Q(\xi)=\frac1{2b}\int_{e_\xi^\perp}e^{-\mathsf i y\cdot\xi}\left[I_q(e_\xi,b;y)+I_q(-e_\xi,b;y)\right]\,dy.
	\end{equation}
	The value $\widehat Q(0)$ is determined by continuity of the Fourier transform of the compactly supported function $Q\in L^1(\R^n)$. Consequently, $Q=\mathcal F^{-1}\widehat Q$ in $\mathcal S'(\R^n)$, and $q=Q|_\Omega$.
\end{corollary}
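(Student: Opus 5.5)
The corollary follows from Theorem~\ref{thm:asymp-intro} by an explicit one-dimensional computation on each chord, followed by the Fourier slice theorem. I would carry it out in three steps.

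\textbf{Step 1: a weighted chord integral.} Fix $e\in\mathbb S^{n-1}$ and $y\in Y_e$, and write $x_\pm=x_e^\pm(y)$. These endpoints lie in $\Gamma_e$, since $e\cdot\nu(x_+)>0$ and $e\cdot\nu(x_-)<0$. By Theorem~\ref{thm:asymp-intro}, $w=w_{q,e,b}$ is $C^2$ on a closed chord tube whose base is a small neighbourhood of $y$ compactly contained in $Y_e$. In particular it is $C^2$ up to $x_\pm$, and $H_q(e,b;x_\pm)=\partial_\nu w(x_\pm)$.

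Near $x_\pm$ the boundary $\partial\Omega$ is a $C^2$ graph over the transverse variable, namely $y'\mapsto y'+\beta_e(y')e$ or $y'\mapsto y'+\alpha_e(y')e$. The function $w$ vanishes on this graph and is $C^1$ up to it, so its tangential gradient vanishes there. Hence $Dw(x_\pm)=\partial_\nu w(x_\pm)\,\nu(x_\pm)$, and therefore $\partial_e w(x_\pm)=(e\cdot\nu(x_\pm))H_q(e,b;x_\pm)$.

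Integrating $W_y''(s)=q(y+se)(s+b)$ over $(\alpha_e(y),\beta_e(y))$ then gives
\[
I_q(e,b;y)=\int_{\alpha_e(y)}^{\beta_e(y)} q(y+se)(s+b)\,ds .
\]
Here $W_y'$ extends continuously to the endpoints because $w$ is $C^2$ up to them, so the fundamental theorem of calculus applies on the closed interval.

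\textbf{Step 2: the direction $-e$ and the X-ray transform.} Apply Step 1 with $-e$ in place of $e$. We have $P_{-e}=P_e$ and $Y_{-e}=Y_e$, and the substitution $s\mapsto -s$ gives $\alpha_{-e}(y)=-\beta_e(y)$ and $\beta_{-e}(y)=-\alpha_e(y)$. This yields
\[
I_q(-e,b;y)=\int_{-\beta_e}^{-\alpha_e}q(y-se)(s+b)\,ds=\int_{\alpha_e}^{\beta_e}q(y+se)(b-s)\,ds .
\]
Adding the two identities cancels the term linear in $s$, and dividing by $2b>0$ gives \eqref{eq:Xray-reconstruction-intro} for $y\in Y_e$.

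For $y\in e^\perp\setminus Y_e$, the line $y+\R e$ misses $\Omega$, so $\Xray Q(e,y)=0$. Both $I_q(\pm e,b;y)$ vanish by definition, so \eqref{eq:Xray-reconstruction-intro} also holds there.

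\textbf{Step 3: Fourier reconstruction.} Since $Q\in L^1$ has compact support, Fubini's theorem applies. For $\xi\in e_\xi^\perp$, decompose $x=y+se_\xi$ with $y\in e_\xi^\perp$; then $x\cdot\xi=y\cdot\xi$. This gives
\[
\widehat Q(\xi)=\int_{e_\xi^\perp}e^{-\mathsf i y\cdot\xi}\,\Xray Q(e_\xi,y)\,dy .
\]
Substituting Step 2 yields \eqref{eq:Fourier-reconstruction-intro}.

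The function $\widehat Q$ is continuous (indeed entire), so $\widehat Q(0)$ is the limit of its values at $\xi\neq0$. Fourier inversion in $\mathcal S'$ then recovers $Q$, and $q=Q|_\Omega$ by continuity of $q$.

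\textbf{Main obstacle.} The only delicate point is in Step 1: justifying that $\partial_e w(x_\pm)=(e\cdot\nu)\partial_\nu w$ and that $W_y'$ has boundary limits. Both rely on the $C^2$-up-to-$\Gamma_e$ regularity of $w$ asserted in Theorem~\ref{thm:asymp-intro}, together with the local graph description of $\partial\Omega$ near non-glancing points. The latter comes from the implicit function theorem, using $e\cdot\nu\neq0$, and gives $C^2$ regularity of $\alpha_e$ and $\beta_e$ near $y$.
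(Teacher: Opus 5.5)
Your proposal is correct and follows essentially the same route as the paper. The paper also integrates the chord equation and converts $\partial_e w_{q,e,b}(x_\pm)$ into $(e\cdot\nu(x_\pm))H_q(e,b;x_\pm)$ via the vanishing tangential gradient, cancels the linear weight by pairing $e$ with $-e$, notes that both sides vanish for $y\notin Y_e$, and then applies the Fourier slice identity with $\widehat Q(0)$ obtained by continuity. The only difference is packaging: the paper splits these steps into Lemmas~\ref{lem:endpoint-identity} and~\ref{lem:remove-weight} and Proposition~\ref{prop:DN-to-Xray}, while you give them in one pass.
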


Two features of the result are worth noting. For each fixed direction $e$, the offset $b$ is fixed and only the amplitude $t$ varies. Thus, the boundary values belong to the one-parameter family $t\ell_{e,b}|_{\partial\Omega}$. The transverse variable $y\in e^\perp$ is not an additional boundary input; it specifies the chord whose two endpoints are used to evaluate the pointwise DN measurements. The proof only uses the large amplitude coefficient $H_q$, and hence equality of the measurements along an unbounded sequence of amplitudes is sufficient; see Remark~\ref{rem:weaker-data}.

Convexity is used to ensure that each intersection $\Omega\cap(y+\R e)$ is a single interval and that the endpoints of every positive length chord are non-glancing. These properties allow the correction equation to be solved chordwise and the endpoint directional derivatives to be converted into normal derivatives. No positive lower bound for the curvature of $\partial\Omega$ is needed. In particular, the domain need not be strictly or uniformly convex.

\para{Organization of the paper}
Section~\ref{sec:forward} establishes the forward well-posedness and defines the DN map. Section~\ref{sec:geometry} develops the geometry of parallel chords and constructs the rank-one correction. Section~\ref{sec:global-asymptotics} derives the exact equation for the difference quotient and proves the global slab estimate. Section~\ref{sec:boundary-asymptotics} establishes the local chord-tube estimate and the resulting DN asymptotics. Section~\ref{sec:recovery} recovers the X-ray transform and proves the reconstruction and uniqueness results.

\section{The forward problem and the Dirichlet-to-Neumann map}\label{sec:forward}

For $q\in C(\overline\Omega)$, recall the operator $F_q$ is given by \eqref{eq:F-intro}.

\begin{proposition}[Degenerate ellipticity and quasilinear structure]\label{prop:PDE-structure}
	For every $(x,r,p)$, the map $X\mapsto F_q(x,r,p,X)$ is degenerate elliptic. More precisely, if $X\le Y$, then
	\begin{equation}\label{eq:degenerate-ellipticity}
		F_q(x,r,p,X)-F_q(x,r,p,Y)=p^T(Y-X)p\ge0.
	\end{equation}
	Written as $-a^{ij}(Du)u_{ij}+q(x)u=0$, the principal coefficient matrix is $a(Du)=Du\otimes Du$. If $Du\ne0$, this matrix has rank one and eigenvalues $|Du|^2,0,\ldots,0$; if $Du=0$, it vanishes. Hence, the equation is degenerate elliptic, but not uniformly elliptic. It is quasilinear and of nondivergence form. If $q\ge c_0>0$ and $r_2\ge r_1$, then $F_q(x,r_2,p,X)-F_q(x,r_1,p,X)\ge c_0(r_2-r_1)$.
\end{proposition}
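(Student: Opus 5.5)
The statement to be proved is Proposition~\ref{prop:PDE-structure}, and every claim in it follows by direct computation from the definition \eqref{eq:F-intro}. I will verify the claims one at a time, in the order stated. No earlier result is needed beyond the definition of $F_q$ and the standing assumption $q\ge c_0>0$ for the last claim.

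\textbf{Degenerate ellipticity.} Fix $(x,r,p)$ and symmetric matrices $X\le Y$. The zeroth-order term $q(x)r$ cancels in the difference, so
\[
F_q(x,r,p,X)-F_q(x,r,p,Y)=-p^TXp+p^TYp=p^T(Y-X)p .
\]
This quantity is nonnegative because $Y-X$ is positive semidefinite. That is exactly \eqref{eq:degenerate-ellipticity}.

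\textbf{Structure of the principal part.} Write
\[
p^TXp=\sum_{i,j}p_ip_jX_{ij}=\operatorname{tr}\bigl((p\otimes p)X\bigr).
\]
Substituting $p=Du$ and $X=D^2u$ gives the form $-a^{ij}(Du)u_{ij}+q(x)u$ with $a(Du)=Du\otimes Du$. To find the spectrum, take $Du\ne0$. Then $(Du\otimes Du)\,Du=|Du|^2Du$, and $(Du\otimes Du)\,v=0$ for every $v\perp Du$. So the eigenvalues are $|Du|^2$ (with eigenvector $Du$) and $0$ with multiplicity $n-1$, and the matrix has rank one. If $Du=0$, the matrix is zero.

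Since $n\ge2$, the smallest eigenvalue is always $0$. Hence no uniform lower bound $\lambda|\xi|^2$ on the principal symbol can hold, and the equation is not uniformly elliptic. The equation is linear in $D^2u$ with coefficients depending only on $Du$, so it is quasilinear. The coefficients $u_{x_i}u_{x_j}$ are not arranged as $\partial_i(\cdot)$ of a flux, so it is of nondivergence form.

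\textbf{Strict monotonicity in $r$.} For $r_2\ge r_1$,
\[
F_q(x,r_2,p,X)-F_q(x,r_1,p,X)=q(x)(r_2-r_1)\ge c_0(r_2-r_1).
\]
The inequality uses $q\ge c_0$ together with $r_2-r_1\ge0$.

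All steps are elementary and there is no substantive obstacle. The only point requiring a little care is to phrase "not uniformly elliptic" correctly: a zero eigenvalue of $a(p)$ exists for every $p$ (this is where $n\ge2$ enters), and not only at $p=0$.
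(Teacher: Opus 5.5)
Your proposal is correct and follows essentially the same route as the paper. Both derive the ellipticity identity from $Y-X\ge0$, the spectrum of $p\otimes p$ from $(p\otimes p)p=|p|^2p$ and $(p\otimes p)\xi=0$ for $\xi\perp p$, and the monotonicity in $r$ from $q\ge c_0$. Your observation that $n\ge2$ guarantees a zero eigenvalue for every $p$, and not only at $p=0$, makes explicit a point the paper leaves implicit.
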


\begin{proof}
	Identity \eqref{eq:degenerate-ellipticity} follows from $Y-X\ge0$. The spectral statement follows from $(p\otimes p)p=|p|^2p$ and $(p\otimes p)\xi=0$ whenever $\xi\perp p$. The remaining claims follow from $\Dinfty u=(Du\otimes Du):D^2u$ and the lower bound for $q$.
\end{proof}

\begin{definition}[Viscosity solution]\label{def:viscosity}
	A function $u\in C(\Omega)$ is called a viscosity subsolution of $F_q(x,u,Du,D^2u)=0$ in $\Omega$ if, whenever $\phi\in C^2(\Omega)$ and $u-\phi$ has a local maximum at $x_0\in\Omega$, one has $F_q(x_0,u(x_0),D\phi(x_0),D^2\phi(x_0))\le0$. It is a viscosity supersolution if, whenever $\phi\in C^2(\Omega)$ and $u-\phi$ has a local minimum at $x_0\in\Omega$, one has $F_q(x_0,u(x_0),D\phi(x_0),D^2\phi(x_0))\ge0$. A viscosity solution is both a viscosity subsolution and a viscosity supersolution.
	
	Given $f\in C(\partial\Omega)$, a function $u\in C(\overline\Omega)$ is a viscosity solution of
	\begin{equation}\label{eq:forward-problem}
		\begin{cases}
			-\Dinfty u+q(x)u=0 & \text{in }\Omega,\\
			u=f & \text{on }\partial\Omega
		\end{cases}
	\end{equation}
	if it is a viscosity solution of $F_q(x,u,Du,D^2u)=0$ in $\Omega$ and satisfies $u=f$ pointwise on $\partial\Omega$.
\end{definition}

Since the unnormalized infinity Laplacian is continuous at $p=0$, no separate convention is needed at critical points. Definition~\ref{def:viscosity} is the usual viscosity definition for a continuous degenerate elliptic operator \cite{CrandallIshiiLions1992}.

\begin{proposition}[Comparison principle]\label{prop:comparison}
	Let $\Omega\subset\R^n$ be a bounded open set, and let $q\in C(\overline\Omega)$ satisfy $q\ge c_0>0$. Suppose that $u,v\in C(\overline\Omega)$, where $u$ is a viscosity subsolution and $v$ is a viscosity supersolution of $F_q=0$ in $\Omega$. If $u\le v$ on $\partial\Omega$, then $u\le v$ in $\overline\Omega$.
\end{proposition}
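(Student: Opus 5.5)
The plan is the standard Crandall--Ishii--Lions doubling-of-variables argument, with the strict monotonicity in $r$ (Proposition~\ref{prop:PDE-structure}) playing the role of the properness constant.

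Argue by contradiction and suppose $M:=\max_{\overline\Omega}(u-v)>0$. Since $u\le v$ on $\partial\Omega$, any maximizer lies in $\Omega$. For $\alpha>0$ we maximize
\[
\Phi_\alpha(x,y)=u(x)-v(y)-\tfrac{\alpha}{2}|x-y|^2
\]
over $\overline\Omega\times\overline\Omega$ and let $(x_\alpha,y_\alpha)$ be a maximum point. The standard penalization lemma gives
\[
\alpha|x_\alpha-y_\alpha|^2\to0,\qquad u(x_\alpha)-v(y_\alpha)\to M .
\]
Every limit point $(\hat x,\hat x)$ satisfies $u(\hat x)-v(\hat x)=M$. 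Uniform continuity of $u$ and $v$ on $\overline\Omega$, together with $u\le v$ on $\partial\Omega$, forces $\hat x\in\Omega$. Hence for large $\alpha$ both $x_\alpha$ and $y_\alpha$ are interior points.

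Next apply the theorem of sums (Ishii's lemma, \cite{CrandallIshiiLions1992}). With $p_\alpha=\alpha(x_\alpha-y_\alpha)$, it yields symmetric matrices $X,Y$ such that $(p_\alpha,X)\in\overline J^{2,+}u(x_\alpha)$ and $(p_\alpha,Y)\in\overline J^{2,-}v(y_\alpha)$. It also gives
\[
\begin{pmatrix}X&0\\0&-Y\end{pmatrix}\le 3\alpha\begin{pmatrix}I&-I\\-I&I\end{pmatrix},
\]
and in particular $X\le Y$. The operator $F_q$ is continuous in all variables, so the viscosity inequalities extend to closed semijets:
\[
-p_\alpha^TXp_\alpha+q(x_\alpha)u(x_\alpha)\le0\le -p_\alpha^TYp_\alpha+q(y_\alpha)v(y_\alpha).
\]
The same gradient $p_\alpha$ appears in both inequalities, and $X\le Y$ gives $p_\alpha^TXp_\alpha\le p_\alpha^TYp_\alpha$. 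Subtracting therefore yields
\[
q(x_\alpha)u(x_\alpha)-q(y_\alpha)v(y_\alpha)\le0 .
\]

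We rewrite the left side as
\[
q(x_\alpha)\bigl(u(x_\alpha)-v(y_\alpha)\bigr)+\bigl(q(x_\alpha)-q(y_\alpha)\bigr)v(y_\alpha).
\]
The first term is at least $c_0\bigl(u(x_\alpha)-v(y_\alpha)\bigr)$, which tends to at least $c_0M$. The second term is bounded by $\omega_q(|x_\alpha-y_\alpha|)\,\|v\|_\infty\to0$, where $\omega_q$ is the modulus of continuity of $q$. Letting $\alpha\to\infty$ gives $c_0M\le0$, a contradiction.

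The main point to check is that the gradient-dependent principal part causes no trouble. It does not, because the identical $p_\alpha$ enters both jets and the $x$-dependence sits only in the zeroth-order term, which is controlled by the continuity of $q$. A possible sign issue would arise if $q(x_\alpha)u(x_\alpha)$ were handled with $u$ of indefinite sign. The splitting above avoids this, since only $q\ge c_0$ is used on the positive difference and boundedness of $v$ on the error term.
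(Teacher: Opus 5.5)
Your proposal is correct and follows the paper's proof essentially step by step: doubling of variables, the theorem of sums with the same gradient $p_\alpha$ in both closed semijets, the deduction $X\le Y$, and then the zeroth-order term. The only cosmetic difference is the final step. You split off $(q(x_\alpha)-q(y_\alpha))v(y_\alpha)$ using the modulus of continuity of $q$, whereas the paper passes to a subsequence $x_\delta,y_\delta\to x_0$, takes the limit in $q(x_\delta)u(x_\delta)\le q(y_\delta)v(y_\delta)$, and divides by $q(x_0)>0$.
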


\begin{proof}
	Assume that $m=\max_{\overline\Omega}(u-v)>0$. Since $u-v\in C(\overline\Omega)$ and $\overline\Omega$ is compact, there exists $\bar x\in\overline\Omega$ such that $u(\bar x)-v(\bar x)=m$. Since $u-v\le0$ on $\partial\Omega$ and $m>0$, the point $\bar x$ cannot lie on $\partial\Omega$. Hence, $\bar x\in\Omega$. For $\delta>0$, define
	\[
	\Phi_\delta(x,y)=u(x)-v(y)-\frac{|x-y|^2}{2\delta}
	\]
	on $\overline\Omega\times\overline\Omega$. Since this set is compact and $\Phi_\delta$ is continuous, there exists $(x_\delta,y_\delta)\in\overline\Omega\times\overline\Omega$ at which 	$\Phi_\delta$ attains its maximum. Comparing with the admissible point $(\bar x,\bar x)$ gives $\Phi_\delta(x_\delta,y_\delta)\ge \Phi_\delta(\bar x,\bar x)=u(\bar x)-v(\bar x)=m$. Hence,
	\begin{equation}\label{eq:doubling-lower}
		u(x_\delta)-v(y_\delta)-\frac{|x_\delta-y_\delta|^2}{2\delta}\ge m.
	\end{equation}
	Since $u$ and $v$ are bounded, \eqref{eq:doubling-lower} implies that $|x_\delta-y_\delta|^2/\delta$ remains bounded. In particular, $|x_\delta-y_\delta|\to0$ as $\delta\downarrow0$. Passing to a subsequence, we may assume that $x_\delta\to x_0$ and $y_\delta\to x_0$ as $\delta\downarrow0$, for some $x_0\in\overline\Omega$. By \eqref{eq:doubling-lower} and continuity,
	\[
	m\le u(x_0)-v(x_0)\le\max_{\overline\Omega}(u-v)=m.
	\]
	Thus, $u(x_0)-v(x_0)=m$. Since $u\le v$ on $\partial\Omega$, it follows that $x_0\in\Omega$. We also have $x_\delta,y_\delta\in\Omega$ for all sufficiently small $\delta$.
	
	Set $p_\delta=(x_\delta-y_\delta)/\delta$. The same vector appears in the two viscosity inequalities because the derivatives of the penalization satisfy
	\[
	D_x\Big(\frac{|x-y|^2}{2\delta}\Big)\Big|_{(x_\delta,y_\delta)}=p_\delta,\quad
	-D_y\Big(\frac{|x-y|^2}{2\delta}\Big)\Big|_{(x_\delta,y_\delta)}=p_\delta.
	\]
	By the theorem of sums \cite[Theorem 3.2]{CrandallIshiiLions1992}, there are matrices $X_\delta,Y_\delta\in\operatorname{Sym}(n)$ such that $(p_\delta,X_\delta)$ belongs to the closed second-order superjet of $u$ at $x_\delta$, while $(p_\delta,Y_\delta)$ belongs to the closed second-order subjet of $v$ at $y_\delta$. These closed jets are obtained as limits of the first and second derivatives of $C^2$ test functions touching $u$ from above and $v$ from below at nearby points. Since $F_q$ is continuous, the viscosity inequalities remain valid for these limiting jets. Hence,
	\[
	\begin{aligned}
		F_q(x_\delta,u(x_\delta),p_\delta,X_\delta)
		&=-p_\delta^TX_\delta p_\delta+q(x_\delta)u(x_\delta)\le0,\\
		F_q(y_\delta,v(y_\delta),p_\delta,Y_\delta)
		&=-p_\delta^TY_\delta p_\delta+q(y_\delta)v(y_\delta)\ge0.
	\end{aligned}
	\]
	The theorem of sums also gives the matrix inequality
	\begin{equation}\label{eq:block-matrix-comparison}
		\begin{pmatrix}
			X_\delta&0\\
			0&-Y_\delta
		\end{pmatrix}
		\le\frac{3}{\delta}
		\begin{pmatrix}
			\Id&-\Id\\
			-\Id&\Id
		\end{pmatrix}.
	\end{equation}
	Testing \eqref{eq:block-matrix-comparison} against the vector $(\xi,\xi)\in\R^n\times\R^n$ gives $\xi^TX_\delta\xi-\xi^TY_\delta\xi\le0$ for every $\xi\in\R^n$, since $\begin{pmatrix}\xi\\ \xi\end{pmatrix}^{T}
	\begin{pmatrix}
		\Id&-\Id\\
		-\Id&\Id
	\end{pmatrix}
	\begin{pmatrix}\xi\\ \xi\end{pmatrix}
	=0$. It follows that $X_\delta\le Y_\delta$. Taking $\xi=p_\delta$ and using the preceding viscosity inequalities, we obtain
	\begin{equation}\label{eq:comparison-chain}
		q(x_\delta)u(x_\delta)
		\le p_\delta^TX_\delta p_\delta
		\le p_\delta^TY_\delta p_\delta
		\le q(y_\delta)v(y_\delta).
	\end{equation}
	As $\delta\downarrow0$, one has $x_\delta\to x_0$, $y_\delta\to x_0$, $u(x_\delta)\to u(x_0)$, and $v(y_\delta)\to v(x_0)$. Since $q$ is continuous, letting $\delta\downarrow0$ in \eqref{eq:comparison-chain} gives $q(x_0)u(x_0)\le q(x_0)v(x_0)$. The assumption $q(x_0)\ge c_0>0$ then implies $u(x_0)\le v(x_0)$, which contradicts $u(x_0)-v(x_0)=m>0$. Therefore, $u\le v$ in $\overline\Omega$.
	\end{proof}

\begin{proposition}[The well-posedness]\label{prop:forward-solvability}
	Let $\Omega\subset\R^n$ be a bounded open set, $q\in C(\overline\Omega)$ that satisfies $q\ge c_0>0$, and $f\in C(\partial\Omega)$. Then \eqref{eq:forward-problem} has a unique viscosity solution $u_f^q\in C(\overline\Omega)\cap\operatorname{Lip}_{\mathrm{loc}}(\Omega)$. Moreover,
	\begin{equation}\label{eq:forward-Linfty-bound}
		\|u_f^q\|_{L^\infty(\Omega)}\le\|f\|_{L^\infty(\partial\Omega)}.
	\end{equation}
	If $f\ge0$ on $\partial\Omega$, then $u_f^q\ge0$ in $\overline\Omega$.
\end{proposition}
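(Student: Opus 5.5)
Uniqueness follows at once from Proposition~\ref{prop:comparison}: if $u_1,u_2\in C(\overline\Omega)$ are two viscosity solutions with the same boundary values $f$, then $u_1\le u_2$ and $u_2\le u_1$ in $\overline\Omega$. Existence will come from Perron's method, following Ishii as presented in \cite{CrandallIshiiLions1992}. This needs a comparison principle, which we have, and a subsolution and a supersolution in $C(\overline\Omega)$ attaining the boundary datum $f$ continuously at every boundary point. Since $F_q$ is continuous and degenerate elliptic, the Perron supremum $u=\sup\{w:\ w \text{ subsolution},\ \underline u\le w\le\overline u\}$ is then a discontinuous viscosity solution. Its semicontinuous envelopes satisfy $u^*\le f\le u_*$ on $\partial\Omega$ because of the barriers. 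Applying comparison to $u^*$ and $u_*$ (the proof above works verbatim for upper and lower semicontinuous functions) gives $u^*\le u_*$. Hence $u$ is continuous on $\overline\Omega$ and $u=f$ on $\partial\Omega$.

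The main obstacle is constructing barriers at each point $x_0\in\partial\Omega$ for an arbitrary bounded open set, with no boundary regularity. I would use cones. Set $M=\|f\|_{L^\infty(\partial\Omega)}$, and for $\eta>0$ let $\delta>0$ be such that $|f(x)-f(x_0)|<\eta$ when $|x-x_0|<\delta$. Put
\[
\overline w(x)=\min\{f(x_0)+\eta+K|x-x_0|,\ M\},\qquad K=2M/\delta .
\]
The cone $c(x)=K|x-x_0|$ satisfies $\Dinfty c=0$ away from $x_0$, since $D^2c\,Dc=0$. So $-\Dinfty c+q(x)c\ge0$ wherever the constant term $f(x_0)+\eta+K|x-x_0|$ is nonnegative. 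The constant $M$ is a supersolution, since $qM\ge0$. A minimum of two supersolutions is a supersolution, so $\overline w$ is a supersolution where its values are nonnegative.

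To handle the sign issue, I would note that $u\equiv M$ and $u\equiv -M$ are a supersolution and a subsolution respectively, and take $\overline w=\min\{\max\{f(x_0)+\eta+K|x-x_0|,\,0\},\,M\}$, or simply restrict the argument to the case where the relevant value is nonnegative. Where $f(x_0)+\eta<0$, I would instead consider the concave-cone variant; since $x_0\notin\Omega$, the cone vertex is never interior. Checking these sign cases is the fiddly part. Taking the infimum over $\eta$ of such supersolutions gives an upper barrier equal to $f(x_0)$ at $x_0$, and symmetric constructions give lower barriers. The bound \eqref{eq:forward-Linfty-bound} follows by comparing $u$ with the constants $\pm M$. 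If $f\ge0$, comparison with the subsolution $0$ gives $u\ge0$.

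Finally, for $\operatorname{Lip}_{\mathrm{loc}}(\Omega)$, note that $u$ is a viscosity solution of $\Dinfty u=h$ with $h=qu\in C(\overline\Omega)$ bounded, by the estimate above. The interior Lipschitz estimates for the inhomogeneous infinity Laplacian in \cite{LuWang2008,BhattacharyaMohammed2012} then give local Lipschitz continuity.
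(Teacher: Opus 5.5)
Your uniqueness argument, the bound \eqref{eq:forward-Linfty-bound}, the sign statement and the local Lipschitz step match the paper. For existence you take a different, more self-contained route. The paper feeds the constant pair $\pm\|f\|_{L^\infty(\partial\Omega)}$ into the Perron theorem of \cite[Theorem 3.1, Corollary 3.3]{BhattacharyaMohammed2012} and leaves attainment of the boundary values to that reference. You instead run Ishii's Perron method yourself and build the boundary barriers. That plan is sound, but the barrier step has a genuine hole.

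If $a=f(x_0)+\eta<0$, which is unavoidable for small $\eta$ whenever $f(x_0)<0$, the cone $c=a+K|x-x_0|$ satisfies $-\Dinfty c+qc=qc<0$ near $x_0$. So it is not a supersolution. Your repair $\min\{\max\{c,0\},M\}$ fails because a maximum of supersolutions is not a supersolution. At a point $x_1\in\Omega$ with $c(x_1)=0$, take $0<\lambda<1$ and any $N>0$, and set $\phi(x)=\lambda Dc(x_1)\cdot(x-x_1)+\tfrac N2\bigl((x-x_1)\cdot\widehat{Dc}(x_1)\bigr)^2$. Convexity of $c$ shows that $\phi$ touches $\max\{c,0\}$ from below at $x_1$, yet $-\Dinfty\phi(x_1)=-\lambda^2|Dc(x_1)|^2N<0$.

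Restricting to nonnegative values is not possible either. Any supersolution that is negative near $x_0$ must satisfy $\Dinfty\overline w\le q\overline w<0$ there, so it must be strictly infinity-superharmonic by a definite amount. Cones are infinity harmonic, so no cone can do this. The same problem arises for the lower barrier when $f(x_0)-\eta>0$.

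The missing ingredient is a concave bending of the cone. Let $r=|x-x_0|$, $D=\diam\Omega$, $M=\|f\|_{L^\infty(\partial\Omega)}$, $\lambda=K/(4D)$, and put $\psi=a+Kr-\lambda r^2$. This is smooth in $\Omega$ because $x_0\notin\Omega$. For radial functions $\Dinfty\psi=\psi'(r)^2\psi''(r)$, and $r\le D$ on $\overline\Omega$. Hence:
\begin{itemize}
\item $\psi'\ge K/2$, so $\Dinfty\psi\le-K^3/(8D)$;
\item $\psi\ge a>-M$;
\item $\psi\ge a+\tfrac34Kr$.
\end{itemize}
Choose $K\ge\max\{8M/(3\delta),(8D\|q\|_{L^\infty(\Omega)}M)^{1/3}\}$. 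Then $-\Dinfty\psi+q\psi\ge K^3/(8D)-\|q\|_{L^\infty(\Omega)}M\ge0$ in $\Omega$. Consequently $\min\{\psi,M\}$ is a supersolution that dominates $f$ on $\partial\Omega$ and is at most $f(x_0)+\eta$ at $x_0$, whatever the sign of $a$. Symmetrically, $\max\{f(x_0)-\eta-Kr+\lambda r^2,-M\}$ is a lower barrier.

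One smaller point: the comparison proof is not quite verbatim for semicontinuous $u^*$ and $u_*$. In the doubling argument, the convergences $u^*(x_\delta)\to u^*(x_0)$ and $u_*(y_\delta)\to u_*(x_0)$ must be deduced from the maximality of $(x_\delta,y_\delta)$ rather than from continuity. With these corrections your Perron argument proves existence without invoking the cited existence theorem.
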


\begin{proof}
	Set $C=\|f\|_{L^\infty(\partial\Omega)}$ and write the equation as $\Dinfty u=G(x,u)$, where $G(x,r)=q(x)r$. The constants $-C$ and $C$ are respectively a viscosity subsolution and a viscosity supersolution, since $0\ge G(x,-C)$ and $0\le G(x,C)$. They are ordered in $\overline\Omega$ and satisfy $-C\le f\le C$ on $\partial\Omega$.
	
	The function $G$ is continuous on $\overline\Omega\times\R$, nondecreasing in its second variable, and bounded on $\overline\Omega\times I$ for every bounded interval $I\subset\R$. The existence theorem for the inhomogeneous infinity Laplace Dirichlet problem can be found in \cite[Corollary 3.3]{BhattacharyaMohammed2012}. In other words, the ordered constant subsolution and supersolution satisfy the hypotheses of the Perron construction in \cite[Theorem 3.1]{BhattacharyaMohammed2012}. We obtain a viscosity solution $u_f^q\in C(\overline\Omega)$ satisfying $-C\le u_f^q\le C$ in $\overline\Omega$.
	
	The uniqueness is ensured by Proposition~\ref{prop:comparison}, and the comparison with the two constant barriers also gives
	\[
	\|u_f^q\|_{L^\infty(\Omega)}\le\|f\|_{L^\infty(\partial\Omega)}.
	\]
	Set $h(x)=q(x)u_f^q(x)$. Since $q$ and $u_f^q$ are continuous on $\overline\Omega$, one has $h\in C(\overline\Omega)\cap L^\infty(\Omega)$, and $u_f^q$ satisfies $\Dinfty u_f^q=h$ in the viscosity sense. The local regularity theorem \cite[Corollary 2.5]{BhattacharyaMohammed2012} yields $u_f^q\in\operatorname{Lip}_{\mathrm{loc}}(\Omega)$.
	
	If $f\ge0$ on $\partial\Omega$, then the zero function is a solution of the equation and lies below the boundary data. Proposition~\ref{prop:comparison} yields $u_f^q\ge0$ in $\overline\Omega$.
\end{proof}

\begin{remark}\label{rem:q-positive}
	The assumption $q\ge c_0>0$ is not an ellipticity condition. The equation is degenerate elliptic independently of the sign of $q$. The lower bound is used in the comparison argument, hence, in the uniqueness of the forward problem. If $q$ vanishes or changes sign, Proposition~\ref{prop:comparison} does not apply in its present form.
\end{remark}

We next record the boundary differentiability needed to define the DN map. A function $u\in C(\overline\Omega)$ is differentiable at $x_0\in\partial\Omega$ if there is a vector $Du(x_0)\in\R^n$ such that
\begin{equation}\label{eq:boundary-differentiability-definition}
	u(x)=u(x_0)+Du(x_0)\cdot(x-x_0)+o(|x-x_0|)
\end{equation}
as $x\to x_0$ within $\overline\Omega$. A boundary function $f\in C(\partial\Omega)$ is differentiable at $x_0$ in the tangential sense if its composition with a $C^1$ local parametrization of $\partial\Omega$ is differentiable at the parameter corresponding to $x_0$. This definition is independent of the choice of local parametrization.

\begin{proposition}[Boundary differentiability]\label{prop:boundary-differentiability}
	Assume the hypotheses of Proposition~\ref{prop:forward-solvability}. Let $x_0\in\partial\Omega$, suppose that $\partial\Omega$ is differentiable at $x_0$, and assume that $f$ is differentiable at $x_0$ in the tangential sense. Then $u_f^q$ is differentiable at $x_0$ in the sense of \eqref{eq:boundary-differentiability-definition}.
\end{proposition}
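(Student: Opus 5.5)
The plan is to reduce the statement to Hong's boundary differentiability theorem for inhomogeneous infinity Laplace equations \cite{Hong2014}. That theorem concerns viscosity solutions of $\Dinfty u=h$ with $h$ bounded and continuous. Its hypotheses are that $\partial\Omega$ is differentiable at $x_0$ and that the boundary datum is differentiable there in the tangential sense; under these, it concludes that $u$ is differentiable at $x_0$ in the sense of \eqref{eq:boundary-differentiability-definition}.

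\textbf{Step 1: rewrite the equation.} By Proposition~\ref{prop:forward-solvability}, the unique solution $u=u_f^q$ lies in $C(\overline\Omega)$ and satisfies $\|u\|_{L^\infty}\le\|f\|_{L^\infty(\partial\Omega)}$. Set $h=qu$. Then $h\in C(\overline\Omega)$ and $\|h\|_{L^\infty}\le\|q\|_{L^\infty}\|f\|_{L^\infty}$.

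\textbf{Step 2: check the viscosity sense.} I will verify that $u$ is a viscosity solution of $\Dinfty u=h$ in $\Omega$. Let $\phi\in C^2$ touch $u$ from above at $x$. Definition~\ref{def:viscosity} gives $-\Dinfty\phi(x)+q(x)u(x)\le0$. Since $q(x)u(x)=h(x)$ is just the frozen value at the touching point, this reads $-\Dinfty\phi(x)+h(x)\le0$. The supersolution inequality follows in the same way. This is a direct substitution with nothing to prove beyond it.

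\textbf{Step 3: match the boundary hypotheses.} Differentiability of $\partial\Omega$ at $x_0$ and tangential differentiability of $f$ at $x_0$ are assumed exactly in the form used in \cite{Hong2014}. Applying that theorem gives a vector $Du(x_0)$ with $u(x)=f(x_0)+Du(x_0)\cdot(x-x_0)+o(|x-x_0|)$ for $x\in\overline\Omega$.

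\textbf{Main obstacle.} The only delicate point is making sure that the hypotheses of \cite{Hong2014} are met. Two issues need attention.

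First, the theorem may be formulated with a sign convention $-\Dinfty u=h$. In that case I replace $h$ by $-h$, which is harmless because only boundedness and continuity of $h$ are used.

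Second, the theorem may require the domain condition in a slightly stronger local form, such as a $C^1$ boundary portion, or may require $h$ to be continuous only in the interior. If a $C^1$ boundary portion is needed, I would note that in the applications of this paper $\partial\Omega$ is $C^2$ and $f=t\ell_{e,b}|_{\partial\Omega}$ is smooth, so the stronger hypotheses hold anyway. If Hong's theorem needs an $L^\infty$ bound on $h$, this is exactly what Step 1 provides, so no further regularity of $u$ is needed.
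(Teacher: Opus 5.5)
Your proposal is correct and follows essentially the same route as the paper: set $h=qu_f^q$, note $h\in C(\Omega)\cap L^\infty(\Omega)$ by Proposition~\ref{prop:forward-solvability}, observe that $u_f^q$ is a viscosity solution of $\Dinfty u_f^q=h$, and invoke \cite[Theorem 1.4]{Hong2014}. Your explicit frozen-value check in Step 2 is a detail the paper leaves implicit, and your contingency discussion is not needed: that theorem already uses the sign convention $\Dinfty u=h$, assumes only $h\in C(\Omega)\cap L^\infty(\Omega)$, and requires only pointwise differentiability of $\partial\Omega$ and of the boundary datum at $x_0$.
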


\begin{proof}
	The solution satisfies $\Dinfty u_f^q=h(x)$, where $h(x)=q(x)u_f^q(x)$. Proposition~\ref{prop:forward-solvability} gives $u_f^q\in C(\overline\Omega)$, and hence $h\in C(\Omega)\cap L^\infty(\Omega)$. The conclusion follows from the boundary differentiability theorem for inhomogeneous infinity Laplace equations \cite[Theorem 1.4]{Hong2014}.
\end{proof}

If $\partial\Omega$ is of class $C^1$ and $f\in C^1(\partial\Omega)$, Proposition~\ref{prop:boundary-differentiability} applies at every boundary point. We define
\begin{equation}\label{eq:DN-map}
	\Lambda_q(f)(x)=Du_f^q(x)\cdot\nu(x)=\partial_\nu u_f^q(x),\quad x\in\partial\Omega.
\end{equation}
If $\varphi$ is defined on $\overline\Omega$, we use $\Lambda_q(\varphi)$ as shorthand for $\Lambda_q(\varphi|_{\partial\Omega})$.

\begin{lemma}[Amplitude scaling]\label{lem:amplitude-scaling}
	Let $t>0$ and $f\in C(\partial\Omega)$. Then
	\begin{equation}\label{eq:solution-scaling}
		u_{tf}^q=t u_f^{q/t^2}.
	\end{equation}
	If $\partial\Omega$ is of class $C^1$ and $f\in C^1(\partial\Omega)$, then
	\begin{equation}\label{eq:DN-scaling}
		\Lambda_q(tf)=t\Lambda_{q/t^2}(f).
	\end{equation}
\end{lemma}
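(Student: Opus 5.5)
The plan is to check directly that $U:=t\,u_f^{q/t^2}$ is a viscosity solution of \eqref{eq:forward-problem} with potential $q$ and boundary data $tf$, and then conclude $U=u_{tf}^q$ from the uniqueness in Proposition~\ref{prop:forward-solvability}. First, the potential $q/t^2$ is continuous on $\overline\Omega$ and satisfies $q/t^2\ge c_0/t^2>0$. Proposition~\ref{prop:forward-solvability} therefore applies to it and gives a unique $u_f^{q/t^2}\in C(\overline\Omega)$ with boundary values $f$. Consequently $U\in C(\overline\Omega)$ and $U=tf$ pointwise on $\partial\Omega$.

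The core step is the test-function verification. Suppose $\phi\in C^2(\Omega)$ and $U-\phi$ has a local maximum at $x_0$. Put $\psi=\phi/t$. Since $t>0$, the function $u_f^{q/t^2}-\psi=(U-\phi)/t$ also has a local maximum at $x_0$, so the subsolution inequality for $u_f^{q/t^2}$ gives
\[
-D\psi(x_0)^TD^2\psi(x_0)D\psi(x_0)+\tfrac{q(x_0)}{t^2}u_f^{q/t^2}(x_0)\le0 .
\]
I will multiply this by $t^3>0$ and use the cubic homogeneity $D\phi^TD^2\phi D\phi=t^3\,D\psi^TD^2\psi D\psi$ together with $t\,u_f^{q/t^2}(x_0)=U(x_0)$. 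This yields $F_q(x_0,U(x_0),D\phi(x_0),D^2\phi(x_0))\le0$. The supersolution inequality follows the same way, with minima in place of maxima. Hence $U$ is a viscosity solution, and uniqueness gives \eqref{eq:solution-scaling}.

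For \eqref{eq:DN-scaling}, I will assume $\partial\Omega\in C^1$ and $f\in C^1(\partial\Omega)$, so that $tf\in C^1(\partial\Omega)$ as well. Proposition~\ref{prop:boundary-differentiability} then makes both $u_{tf}^q$ and $u_f^{q/t^2}$ differentiable at every boundary point. The expansion \eqref{eq:boundary-differentiability-definition} for $u_f^{q/t^2}$, multiplied by $t$, is an expansion for $u_{tf}^q$, and boundary derivatives are unique, so $Du_{tf}^q(x_0)=t\,Du_f^{q/t^2}(x_0)$. Taking the dot product with $\nu(x_0)$ gives \eqref{eq:DN-scaling}.

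The argument is essentially routine. The only points that need care are that $t>0$ preserves the direction of the viscosity inequalities, and that the rescaled potential still has a positive lower bound, which is what makes the uniqueness available.
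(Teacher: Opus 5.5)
Your proposal is correct and follows the paper's approach. The cubic homogeneity of the infinity Laplacian turns the potential $q$ into $q/t^2$, uniqueness from the comparison principle identifies the two solutions, and the normal derivative then gives the DN identity. The paper runs the same argument in the opposite direction, showing that $t^{-1}u_{tf}^q$ solves the problem with potential $t^{-2}q$ and data $f$, and states the viscosity verification only formally; your test-function check with $\psi=\phi/t$ supplies that detail.
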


\begin{proof}
	Let $v=t^{-1}u_{tf}^q$. Since $\Dinfty(tv)=t^3\Dinfty v$, the function $v$ satisfies $-\Dinfty v+t^{-2}q(x)v=0$ in $\Omega$ and $v=f$ on $\partial\Omega$. Uniqueness gives $v=u_f^{q/t^2}$ and proves \eqref{eq:solution-scaling}. Under the additional boundary regularity, taking the normal derivative gives \eqref{eq:DN-scaling}.
\end{proof}

\section{Convex chord geometry and the rank-one correction}\label{sec:geometry}

Fix $e\in\mathbb S^{n-1}$ and let $P_e=\Id-e\otimes e$ be the orthogonal projection onto $e^\perp$. Every $x\in\R^n$ has a unique representation $x=y+se$, where $y=P_ex\in e^\perp$ and $s=e\cdot x$. We write $B_\delta^\perp(y_0)$ for the ball of radius $\delta$ in $e^\perp$.

\begin{lemma}[Geometry of parallel chords]\label{lem:chord-geometry}
	Let $\Omega\subset\R^n$ be a bounded convex domain with $C^2$ boundary and set $Y_e=P_e\Omega$.
	\begin{enumerate}[\rm(i)]
		\item The set $Y_e$ is bounded, open, and convex. For every $y\in Y_e$, there are finite numbers $\alpha_e(y)<\beta_e(y)$ such that
		\begin{equation}\label{eq:chord-parametrization}
			\Omega\cap(y+\R e)=\{y+se:\, \alpha_e(y)<s<\beta_e(y)\}.
		\end{equation}
		\item If $x_e^-(y)=y+\alpha_e(y)e$ and $x_e^+(y)=y+\beta_e(y)e$, then
		\begin{equation}\label{eq:non-glancing-endpoints}
			e\cdot\nu(x_e^+(y))>0,\quad e\cdot\nu(x_e^-(y))<0.
		\end{equation}
		\item The functions $\alpha_e,\beta_e$ belong to $C^2(Y_e)$. For every $y_0\in Y_e$, there is $\delta>0$ such that
		\begin{equation}\label{eq:local-tube-geometry}
			\Omega\cap\{y+se:y\in B_\delta^\perp(y_0)\}=\{y+se:y\in B_\delta^\perp(y_0),\ \alpha_e(y)<s<\beta_e(y)\},
		\end{equation}
		and $\beta_e-\alpha_e$ has a positive lower bound on $\overline{B_\delta^\perp(y_0)}$.
		\item On the upper and lower faces in \eqref{eq:local-tube-geometry}, the outward normals are
		\begin{equation}\label{eq:endpoint-normal-formulas}
			\nu_+(y)=\frac{e-D_y\beta_e(y)}{\sqrt{1+|D_y\beta_e(y)|^2}},\quad \nu_-(y)=\frac{D_y\alpha_e(y)-e}{\sqrt{1+|D_y\alpha_e(y)|^2}}.
		\end{equation}
		There also hold that $e\cdot\nu_+(y)=(1+|D_y\beta_e(y)|^2)^{-1/2}$ and $e\cdot\nu_-(y)=-(1+|D_y\alpha_e(y)|^2)^{-1/2}$.
	\end{enumerate}
\end{lemma}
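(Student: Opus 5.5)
Part (i) is elementary convexity. The projection $P_e$ is linear, continuous and open onto $e^\perp$, so $Y_e=P_e\Omega$ is open, bounded and convex. For $y\in Y_e$ the set $\{s:y+se\in\Omega\}$ is a nonempty open convex subset of $\R$. It is also bounded because $\Omega$ is bounded, so it is an interval $(\alpha_e(y),\beta_e(y))$ with finite endpoints. The endpoints $x_e^\pm(y)$ lie on $\partial\Omega$.

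For (ii), let $\rho$ be a $C^2$ defining function near $x_+=x_e^+(y)$, with $\Omega=\{\rho<0\}$ locally and $\nu=D\rho/|D\rho|$. The function $s\mapsto\rho(y+se)$ is negative on $(\alpha,\beta)$ and vanishes at $\beta$, so $e\cdot\nu(x_+)\ge0$. To rule out equality, use convexity. Pick an interior point $x_0=y+s_0e$ of the chord and a ball $B_r(x_0)\subset\Omega$. The convex hull of $B_r(x_0)$ and $x_+$ lies in $\overline\Omega$, and its interior lies in $\Omega$. This cone must lie in the closed half-space $\{(x-x_+)\cdot\nu(x_+)\le0\}$, since the supporting hyperplane of a convex $C^1$ domain at a boundary point is the tangent plane. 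Hence $(x_0-x_+)\cdot\nu(x_+)\le -r<0$. As $x_0-x_+=(s_0-\beta)e$ with $s_0-\beta<0$, this gives $e\cdot\nu(x_+)\ge r/(\beta-s_0)>0$. The same argument gives $e\cdot\nu(x_-)<0$.

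For (iii), apply the implicit function theorem to $\Phi(y,s)=\rho(y+se)$ near $(y_0,\beta_e(y_0))$. We have $\partial_s\Phi=D\rho\cdot e=|D\rho|\,e\cdot\nu>0$ there by (ii), so the zero set is locally a $C^2$ graph $s=\tilde\beta(y)$. It remains to identify $\tilde\beta$ with $\beta_e$ near $y_0$. Since $\partial_s\Phi>0$, near the endpoint the points with $s<\tilde\beta(y)$ lie in $\Omega$ and those with $s>\tilde\beta(y)$ lie outside. By uniqueness of the chord interval from (i), together with continuity, this forces $\beta_e=\tilde\beta$ near $y_0$. This is the step I expect to be most delicate. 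The worry is that the chord might exit and reenter, but convexity in (i) excludes this.

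I would also verify continuity of $\beta_e$ so that the chord endpoint stays near $x_+$. Lower semicontinuity follows from openness of $\Omega$. Upper semicontinuity follows from the fact that the limit point of $y+\beta_e(y)e$ lies in $\partial\Omega$ on the line through $y_0$, and the only such points are $x_e^\pm(y_0)$. With both, the local graph identification applies, and the same works for $\alpha_e$, so $\alpha_e,\beta_e\in C^2(Y_e)$. Choosing $\delta$ so that $\overline{B_\delta^\perp(y_0)}\subset Y_e$ gives \eqref{eq:local-tube-geometry} directly from (i) applied to each $y$. The positive lower bound for $\beta_e-\alpha_e$ follows from continuity and positivity on this compact ball.

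For (iv), the upper face is the graph $\{y+\beta_e(y)e\}$, which is the zero set of $g(y+se)=s-\beta_e(y)$. Its gradient is $e-D_y\beta_e(y)$, where $D_y\beta_e\in e^\perp$. This vector points outward because $g<0$ inside, and normalizing it gives $\nu_+$. The orthogonality $D_y\beta_e\perp e$ yields $e\cdot\nu_+=(1+|D_y\beta_e|^2)^{-1/2}$. For the lower face, use $g=\alpha_e(y)-s$, which gives $\nu_-$ and its sign in the same way.
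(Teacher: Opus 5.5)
Your proposal is correct and takes essentially the same route as the paper: convexity gives (i), and (ii) follows because the tangent hyperplane supports $\Omega$ with strict inequality at interior points (your ball $B_r(x_0)$ makes this quantitative); (iii) applies the implicit function theorem to $\rho(y+se)$, and (iv) normalizes the gradients of $s-\beta_e(y)$ and $\alpha_e(y)-s$. The only variation is how you identify the implicit-function graph with $\beta_e$: you first prove continuity of $\beta_e$ by semicontinuity, whereas the paper fixes an interior level $s_0$ with $y+s_0e\in\Omega$ for $y$ near $y_0$; both versions rely on the same convexity fact, that a line through an interior point meets $\partial\Omega$ only at the two chord endpoints, and both are valid.
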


\begin{proof}
	The projection $P_e:\R^n\to e^\perp$ is a surjective linear map and is open. Hence, $Y_e=P_e\Omega$ is open. Its boundedness and convexity follow from the corresponding properties of $\Omega$.
	
	Fix $y\in Y_e$. The intersection $\Omega\cap(y+\R e)$ is a nonempty, bounded, open, and convex subset of the affine line $y+\R e$, then it is an open interval. This gives uniquely determined finite numbers $\alpha_e(y)<\beta_e(y)$ and proves \eqref{eq:chord-parametrization}.
	
	Let $x_+=x_e^+(y)$. For every sufficiently small $h>0$, one has $x_+-he\in\Omega$. Since $\Omega$ is convex and $\partial\Omega$ is of class $C^1$, the tangent hyperplane at $x_+$ supports $\Omega$, and hence, $(z-x_+)\cdot\nu(x_+)\le0$, for every $z\in\overline\Omega$. The inequality is strict when $z\in\Omega$. Indeed, if an interior point belonged to the supporting hyperplane, a sufficiently small ball centered at that point would meet the complementary half-space. Taking $z=x_+-he$ gives $-h e\cdot\nu(x_+)<0$, then $e\cdot\nu(x_+)>0$. At the lower endpoint, the point $x_e^-(y)+he$ lies in $\Omega$ for small $h>0$, and the same argument gives $e\cdot\nu(x_e^-(y))<0$.
	
	Fix $y_0\in Y_e$ and choose $s_0$ such that $y_0+s_0e\in\Omega$. By openness, $y+s_0e\in\Omega$ for every $y$ in a sufficiently small neighborhood of $y_0$. Let $\rho_+$ and $\rho_-$ be $C^2$ defining functions in neighborhoods of $x_e^+(y_0)$ and $x_e^-(y_0)$, respectively, with $\Omega=\{\rho_\pm<0\}$ in the corresponding neighborhoods. Set $G_\pm(y,s)=\rho_\pm(y+se)$. By \eqref{eq:non-glancing-endpoints}, $\partial_sG_+(y_0,\beta_e(y_0))\ne0$ and $\partial_sG_-(y_0,\alpha_e(y_0))\ne0$. The implicit function theorem gives $C^2$ functions $\widetilde\beta$ and $\widetilde\alpha$ whose graphs describe the upper and lower boundary near the two endpoints. After shrinking the transverse neighborhood, we may assume that $\widetilde\alpha(y)<s_0<\widetilde\beta(y)$ for every $y$ in that neighborhood. Since $\Omega\cap(y+\R e)$ is an interval containing $y+s_0e$, the two graph points are precisely the lower and upper endpoints of this interval. It follows that $\widetilde\alpha=\alpha_e$ and $\widetilde\beta=\beta_e$ locally. These local representations agree on overlaps because the two endpoints of each chord are unique. Consequently, $\alpha_e,\beta_e\in C^2(Y_e)$. Since $\beta_e(y_0)-\alpha_e(y_0)>0$, continuity gives a positive lower bound for the chord length on a sufficiently small closed transverse ball.
	
	The domain lies below the upper graph $s=\beta_e(y)$ and above the lower graph $s=\alpha_e(y)$. The outward normals are therefore the normalized gradients of $s-\beta_e(y)$ and $\alpha_e(y)-s$, respectively. This gives \eqref{eq:endpoint-normal-formulas} and the stated formulas for their scalar products with $e$.
\end{proof}

\begin{lemma}\label{lem:Gamma-chords}
	Let $\Gamma_e=\{x\in\partial\Omega:\, e\cdot\nu(x)\ne0\}$. If $x\in\Gamma_e$ and $y=P_ex$, then $y\in Y_e$ and $x$ is either $x_e^-(y)$ or $x_e^+(y)$. Moreover, every endpoint of a positive length chord belongs to $\Gamma_e$.
\end{lemma}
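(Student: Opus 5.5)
We take $x\in\Gamma_e$ and set $y=P_ex$, $s_x=e\cdot x$, so that $x=y+s_xe$. The plan is to show that the line $y+\R e$ actually enters $\Omega$ near $x$. This will give $y\in Y_e$. Then Lemma~\ref{lem:chord-geometry}(i) identifies $x$ with an endpoint of the chord.

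For the first step, assume without loss of generality that $e\cdot\nu(x)>0$; the case $e\cdot\nu(x)<0$ is symmetric with $-h$ replacing $h$. Near $x$ we write $\Omega=\{\rho<0\}$ for a $C^2$ defining function $\rho$ with $D\rho(x)=|D\rho(x)|\nu(x)$. Then
\[
\frac{d}{dh}\rho(x-he)\Big|_{h=0}=-|D\rho(x)|\,e\cdot\nu(x)<0 .
\]
Hence $\rho(x-he)<0$ for all small $h>0$, that is, $x-he\in\Omega$. In particular $y\in P_e\Omega=Y_e$.

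By \eqref{eq:chord-parametrization}, the points $x-he$ for small $h>0$ lie in the open interval $\{y+se:\alpha_e(y)<s<\beta_e(y)\}$. So $\alpha_e(y)<s_x-h<\beta_e(y)$, and letting $h\downarrow0$ gives $\alpha_e(y)\le s_x\le\beta_e(y)$. Since $x\in\partial\Omega$ and $\Omega$ is open, $x\notin\Omega$, so $s_x$ cannot lie strictly between $\alpha_e(y)$ and $\beta_e(y)$. Therefore $s_x\in\{\alpha_e(y),\beta_e(y)\}$, which means $x=x_e^-(y)$ or $x=x_e^+(y)$.

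For the last claim, a positive length chord is $\Omega\cap(y+\R e)$ with $y\in Y_e$, and its endpoints are $x_e^\pm(y)$. These points lie in $\partial\Omega$: they are limits of points of $\Omega$, and they are not in $\Omega$ because the intersection is exactly the open interval. By \eqref{eq:non-glancing-endpoints}, $e\cdot\nu(x_e^\pm(y))\ne0$, so both endpoints belong to $\Gamma_e$.

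The only step needing care is the first one: showing that a non-glancing boundary point has interior points on its own line. This uses the $C^1$ defining function. Everything else is bookkeeping with Lemma~\ref{lem:chord-geometry}.
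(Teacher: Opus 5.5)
Your proof is correct and follows essentially the same route as the paper. A $C^1$ defining function shows that a non-glancing boundary point has points of $\Omega$ on its own line $y+\R e$, giving $y\in Y_e$; the chord parametrization of Lemma~\ref{lem:chord-geometry}(i) together with $x\notin\Omega$ then forces $x$ to be an endpoint, and the converse follows from the non-glancing property in Lemma~\ref{lem:chord-geometry}(ii). You also check explicitly that $x_e^\pm(y)\in\partial\Omega$, which the paper leaves implicit.
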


\begin{proof}
	We first prove that every endpoint of a positive length chord belongs to $\Gamma_e$. Let $y\in Y_e$. By \eqref{eq:non-glancing-endpoints}, we know that $e\cdot\nu$ does not vanish at either endpoint, and therefore $x_e^\pm(y)\in\Gamma_e$.
	
	Conversely, let $x\in\Gamma_e$ and set $y=P_ex$. Write $s_0=e\cdot x$, so that $x=y+s_0e$. Let $\rho$ be a $C^1$ defining function for $\Omega$ in a neighborhood of $x$, chosen so that $\Omega=\{\rho<0\}$ and $D\rho(x)=|D\rho(x)|\nu(x)$. Since $x\in\Gamma_e$,
	\[
	\frac{d}{dh}\rho(x+he)\Big|_{h=0}=D\rho(x)\cdot e=|D\rho(x)|\,e\cdot\nu(x)\ne0.
	\]
	It follows that, for all sufficiently small $h>0$, the values $\rho(x+he)$ and $\rho(x-he)$ have opposite signs. Thus, exactly one of the points $x+he$ and $x-he$ lies in $\Omega$. In particular, the line $y+\R e$ contains an interior point of $\Omega$, and hence $y\in P_e\Omega=Y_e$.
	
	By \eqref{eq:chord-parametrization}, we have $\Omega\cap(y+\R e)=\{y+se:\, \alpha_e(y)<s<\beta_e(y)\}$. Since $x=y+s_0e$ belongs to $\partial\Omega$, one cannot have $\alpha_e(y)<s_0<\beta_e(y)$, for otherwise $x\in\Omega$. On the other hand, the line contains points of $\Omega$ arbitrarily close to $x$, so $s_0$ belongs to the closure of $(\alpha_e(y),\beta_e(y))$. Therefore, $s_0=\alpha_e(y)$ or $s_0=\beta_e(y)$, and hence, $x=x_e^-(y)$ or $x=x_e^+(y)$.
\end{proof}

Fix $b>R_\Omega$. Since $y\in e^\perp$, one has $\ell_{e,b}(y+se)=s+b$. For each $y\in Y_e$, consider the one-dimensional Dirichlet problem
\begin{equation}\label{eq:chord-correction}
	\begin{cases}
		\partial_s^2w_{q,e,b}(y,s)=q(y+se)(s+b) & \text{for }\alpha_e(y)<s<\beta_e(y),\\
		w_{q,e,b}(y,s)=0 & \text{for }s=\alpha_e(y)\text{ and }s=\beta_e(y).
	\end{cases}
\end{equation}
The next lemma shows that \eqref{eq:chord-correction} has a unique solution for every $y\in Y_e$. Since every $x\in\Omega$ has a unique representation $x=y+se$ with $y\in Y_e$ and $\alpha_e(y)<s<\beta_e(y)$, these chordwise solutions define a function on $\Omega$ by $w_{q,e,b}(y+se):=w_{q,e,b}(y,s)$. We continue to denote this function by $w_{q,e,b}$.

\begin{lemma}[Formula, uniqueness, and local regularity]\label{lem:correction-formula}
	For every $y\in Y_e$, the problem \eqref{eq:chord-correction} has a unique solution. Let $y_0\in Y_e$, and let $\alpha=\alpha_e$ and $\beta=\beta_e$ on a closed chord tube around $y_0$. Set $F(y,r)=q(y+re)(r+b)$ and $L(y)=\beta(y)-\alpha(y)$. Then
	\begin{equation}\label{eq:w-formula}
		w_{q,e,b}(y,s)=\int_{\alpha(y)}^s(s-r)F(y,r)\,dr-\frac{s-\alpha(y)}{L(y)}\int_{\alpha(y)}^{\beta(y)}(\beta(y)-r)F(y,r)\,dr.
	\end{equation}
	The function $w_{q,e,b}$ belongs to $C^2$ on the closed tube, vanishes on its two physical faces, and satisfies
	\begin{equation}\label{eq:w-limit-equation}
		-\partial_e^2w_{q,e,b}+q\ell_{e,b}=0.
	\end{equation}
	Moreover, $w_{q,e,b}\le0$ in $\Omega$, and
	\begin{equation}\label{eq:w-global-bound}
		\|w_{q,e,b}\|_{L^\infty(\Omega)}\le\frac{\diam(\Omega)^2}{8}(b+R_\Omega)\|q\|_{L^\infty(\Omega)}.
	\end{equation}
\end{lemma}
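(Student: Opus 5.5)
The plan has four parts: solve the chordwise problem explicitly, read off regularity from the formula, deduce the limiting equation \eqref{eq:w-limit-equation}, and obtain the sign and the bound \eqref{eq:w-global-bound} from a one-dimensional maximum principle with an explicit comparison.

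\textbf{Existence, uniqueness and formula.} Fix $y\in Y_e$. The right-hand side $r\mapsto F(y,r)=q(y+re)(r+b)$ is continuous on $[\alpha(y),\beta(y)]$. Uniqueness holds because the difference of two solutions is affine in $s$ and vanishes at both endpoints. For existence I check \eqref{eq:w-formula} directly.
\begin{itemize}
\item The first term $\int_{\alpha}^s(s-r)F\,dr$ has second $s$-derivative $F(y,s)$ and vanishes at $s=\alpha$.
\item The subtracted term is affine in $s$ and vanishes at $s=\alpha$.
\item At $s=\beta$ the two terms cancel, since $\frac{\beta-\alpha}{L}=1$.
\end{itemize}

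\textbf{Regularity.} By Lemma~\ref{lem:chord-geometry}(iii), $\alpha,\beta\in C^2$ near $y_0$ and $L\ge L_0>0$ on a closed transverse ball $\overline{B^\perp_\delta(y_0)}$. The integrand $F(y,r)$ is $C^2$ in $(y,r)$ because $q\in C^2(\overline\Omega)$. One point needs care: for $y$ near $y_0$ the point $y+re$ with $r$ between the chord endpoints lies in $\overline\Omega$, so $F$ is defined on the closed tube. Differentiating under the integral sign with variable limits (Leibniz rule) shows that \eqref{eq:w-formula} defines a $C^2$ function of $(y,s)$ on the closed tube $\{y\in\overline{B_\delta^\perp(y_0)},\ \alpha(y)\le s\le\beta(y)\}$.

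I expect the main obstacle to be the regularity count. Derivatives of $\int_\alpha^s(s-r)F\,dr$ in $y$ produce boundary terms such as $-(s-\alpha)F(y,\alpha)\,D\alpha$. These involve only first derivatives of $\alpha$, so second derivatives use at most $D^2\alpha$ and $D^2 F$. Hence $C^2$ data give exactly $C^2$, with no loss. Since the formula involves only values of $w$ on the chord, the chordwise definition coincides with it on the tube. Vanishing on the two faces $s=\alpha(y)$ and $s=\beta(y)$ holds by construction.

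\textbf{Limiting equation.} In the coordinates $x=y+se$ we have $\partial_e=\partial_s$ and $\ell_{e,b}(y+se)=s+b$. So \eqref{eq:chord-correction} is exactly \eqref{eq:w-limit-equation}, pointwise and classically, in each tube and therefore throughout $\Omega$.

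\textbf{Sign and global bound.} Since $b>R_\Omega\ge|x|$, we have $s+b=e\cdot x+b>0$ on $\overline\Omega$, so $F>0$. Each chord function is then convex with zero boundary values, which gives $w\le0$.

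For the bound set $M=(b+R_\Omega)\|q\|_{L^\infty}$, so that $0<F\le M$. Compare with
\[
\phi(s)=-\tfrac M2(s-\alpha)(\beta-s),
\]
which satisfies $\phi''=M\ge F$ and $\phi=0$ at both endpoints. Then $w-\phi$ is concave with zero boundary values, hence $w-\phi\ge0$, i.e. $w\ge\phi$. Therefore
\[
|w|\le\frac M2\cdot\frac{L^2}{4}\le\frac{\diam(\Omega)^2}{8}M,
\]
using $L\le\diam(\Omega)$. This is \eqref{eq:w-global-bound}.
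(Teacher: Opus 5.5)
Your proposal is correct and follows the paper's proof essentially step by step: integration along each chord for the formula and uniqueness, the Leibniz rule on the closed tube (where $\alpha,\beta\in C^2$ and $L$ is bounded below) for $C^2$ regularity, and chordwise convexity for $w_{q,e,b}\le 0$. The only difference is cosmetic: you obtain \eqref{eq:w-global-bound} by comparing with the quadratic barrier $-\frac M2(s-\alpha)(\beta-s)$ through concavity, whereas the paper writes $w_{q,e,b}$ via the Dirichlet Green function of $-\frac{d^2}{ds^2}$ and computes $\int_\alpha^\beta G_{\alpha,\beta}(s,r)\,dr=\frac12(s-\alpha)(\beta-s)$, which yields exactly the same bound.
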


\begin{proof}
	Fix $y\in Y_e$ and abbreviate $\alpha=\alpha_e(y)$ and $\beta=\beta_e(y)$. Twice integrating \eqref{eq:chord-correction} from $\alpha$ gives
	\[
	w(y,s)=\int_\alpha^s(s-r)F(y,r)\,dr+C_0(y)+C_1(y)(s-\alpha).
	\]
	The boundary condition at $s=\alpha$ gives $C_0(y)=0$. Imposing the boundary condition at $s=\beta$ gives
	\[
	C_1(y)=-\frac1{\beta-\alpha}\int_\alpha^\beta(\beta-r)F(y,r)\,dr.
	\]
	This proves \eqref{eq:w-formula}. It also proves uniqueness of the chordwise Dirichlet problem.
	
	On a closed chord tube, the functions $q,\alpha,\beta$ are of class $C^2$, and $L(y)=\beta(y)-\alpha(y)$ has a positive lower bound, then the Leibniz rule may be applied to \eqref{eq:w-formula} with respect to both $y$ and $s$, including at the two physical faces. It follows that $w_{q,e,b}\in C^2$ on the closed tube. Differentiating twice with respect to $s$ gives
	\[
	\partial_s^2w_{q,e,b}(y,s)=F(y,s)=q(y+se)(s+b).
	\]
	Substituting $s=\alpha(y)$ and $s=\beta(y)$ into \eqref{eq:w-formula} gives the two zero boundary values. Thus, \eqref{eq:w-limit-equation} holds on the tube.
	
	Since $q>0$ and $s+b>0$ on every chord, one has $\partial_s^2w_{q,e,b}\ge0$. Hence, $s\mapsto w_{q,e,b}(y,s)$ is convex. A convex function that vanishes at both endpoints of an interval is nonpositive in the interval, so that $w_{q,e,b}\le0$ in $\Omega$.
	
	For the uniform bound, fix $y\in Y_e$ and continue to write $\alpha=\alpha_e(y)$ and $\beta=\beta_e(y)$. The Green function of $-\frac{d^2}{ds^2}$ on $(\alpha,\beta)$ with zero endpoint values is
	\[
	G_{\alpha,\beta}(s,r)=
	\begin{cases}
		\displaystyle\frac{(s-\alpha)(\beta-r)}{\beta-\alpha} & \text{if }\alpha\le s\le r\le\beta,\\[2mm]
		\displaystyle\frac{(r-\alpha)(\beta-s)}{\beta-\alpha} & \text{if }\alpha\le r\le s\le\beta.
	\end{cases}
	\]
	Indeed, for each fixed $r\in(\alpha,\beta)$, the function $s\mapsto G_{\alpha,\beta}(s,r)$ is affine on $(\alpha,r)$ and $(r,\beta)$, vanishes at $s=\alpha$ and $s=\beta$, and is continuous at $s=r$. Its first derivative has the jump
	\[
	\partial_sG_{\alpha,\beta}(r^+,r)-\partial_sG_{\alpha,\beta}(r^-,r)
	=-\frac{r-\alpha}{\beta-\alpha}-\frac{\beta-r}{\beta-\alpha}=-1,
	\]
	and hence, $-\partial_s^2G_{\alpha,\beta}(s,r)=\delta_r(s)$ in the distributional sense. Since $w_{q,e,b}$ satisfies $\partial_s^2w_{q,e,b}(y,s)=q(y+se)(s+b)$ with zero endpoint values, its Green representation is
	\[
	w_{q,e,b}(y,s)=-\int_\alpha^\beta G_{\alpha,\beta}(s,r)q(y+re)(r+b)\,dr.
	\]
	For fixed $s\in(\alpha,\beta)$, splitting the integral at $r=s$ gives
	\[
	\begin{aligned}
		\int_\alpha^\beta G_{\alpha,\beta}(s,r)\,dr=\frac{\beta-s}{\beta-\alpha}\int_\alpha^s(r-\alpha)\,dr
		+\frac{s-\alpha}{\beta-\alpha}\int_s^\beta(\beta-r)\,dr=\frac{(s-\alpha)(\beta-s)}{2}.
	\end{aligned}
	\]
	The last expression is maximized at $s=(\alpha+\beta)/2$, and this implies 
	\[
	\sup_{\alpha<s<\beta}\int_\alpha^\beta G_{\alpha,\beta}(s,r)\,dr
	=\frac{(\beta-\alpha)^2}{8}.
	\]
	Moreover, $y+re\in\Omega$ implies $|r|=|e\cdot(y+re)|\le R_\Omega$, so $|r+b|\le b+R_\Omega$. Since $\beta-\alpha$ is the length of the chord $\Omega\cap(y+\R e)$, one also has $\beta-\alpha\le\diam(\Omega)$. Consequently,
	\[
	\begin{aligned}
		|w_{q,e,b}(y,s)|\le \|q\|_{L^\infty(\Omega)}(b+R_\Omega)\int_\alpha^\beta G_{\alpha,\beta}(s,r)\,dr\le \frac{\diam(\Omega)^2}{8}(b+R_\Omega)\|q\|_{L^\infty(\Omega)}.
	\end{aligned}
	\]
	Taking the supremum over all chords and all points on each chord proves \eqref{eq:w-global-bound}.
\end{proof}

At every non-glancing endpoint, $w_{q,e,b}=0$ on the corresponding boundary face. Thus, its tangential derivatives vanish and
\begin{equation}\label{eq:w-normal-relation}
	\partial_e w_{q,e,b}=(e\cdot\nu)\partial_\nu w_{q,e,b}.
\end{equation}

\section{Exact large amplitude scaling and global estimates}\label{sec:global-asymptotics}

Fix $q\in C^2(\overline\Omega)$ with $q\ge c_0>0$, $e\in\mathbb S^{n-1}$, and $b>R_\Omega$. Let $u_{t,e,b}^q$ solve \eqref{eq:large-forward-intro}. For a function $\varphi$ defined on $\overline\Omega$, we write $\Lambda_q(\varphi)$ for $\Lambda_q(\varphi|_{\partial\Omega})$.

Set $\eps=t^{-2}$ and define
\begin{equation}\label{eq:v-eps-def}
	v_\eps=t^{-1}u_{t,e,b}^q.
\end{equation}
By Lemma~\ref{lem:amplitude-scaling}, $v_\eps$ is the unique solution of
\begin{equation}\label{eq:v-eps-problem}
	\begin{cases}
		-\Dinfty v_\eps+\eps q(x)v_\eps=0 & \text{in }\Omega,\\
		v_\eps=\ell_{e,b} & \text{on }\partial\Omega.
	\end{cases}
\end{equation}
Define
\begin{equation}\label{eq:z-eps-def}
	z_\eps=\frac{v_\eps-\ell_{e,b}}{\eps}.
\end{equation}
Then it is clear that $z_\eps\in C(\overline\Omega)$ and $z_\eps=0$ on $\partial\Omega$.

\begin{lemma}[Exact equation for the difference quotient]\label{lem:z-equation}
	The function $z_\eps$ is a viscosity solution of
	\begin{equation}\label{eq:z-exact}
		\mathcal F_\eps[z_\eps]=0\quad\text{in }\Omega,
	\end{equation}
	where
	\begin{equation}\label{eq:F-eps-def}
		\mathcal F_\eps[\phi]=-(e+\eps D\phi)^TD^2\phi(e+\eps D\phi)+q(x)(\ell_{e,b}+\eps\phi),\quad \phi\in C^2(\Omega).
	\end{equation}
\end{lemma}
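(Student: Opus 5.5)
The proof transfers test functions through the affine change of variables $v_\eps=\ell_{e,b}+\eps z_\eps$. No differentiation of $z_\eps$ is involved. The key observation is that $\phi\mapsto\psi:=\ell_{e,b}+\eps\phi$ is a bijection of $C^2(\Omega)$, with inverse $\phi=(\psi-\ell_{e,b})/\eps$. Because $\eps>0$, this bijection preserves the extremum type: $z_\eps-\phi=(v_\eps-\psi)/\eps$. Hence $z_\eps-\phi$ has a local maximum (respectively minimum) at $x_0$ if and only if $v_\eps-\psi$ does.

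For the subsolution property, let $\phi\in C^2(\Omega)$ and suppose that $z_\eps-\phi$ has a local maximum at $x_0\in\Omega$. Set $\psi=\ell_{e,b}+\eps\phi$. Then $v_\eps-\psi$ has a local maximum at $x_0$. Since $v_\eps$ is a viscosity solution of \eqref{eq:v-eps-problem} (Lemma~\ref{lem:amplitude-scaling} and Proposition~\ref{prop:forward-solvability}), we obtain
\[
-D\psi(x_0)^TD^2\psi(x_0)D\psi(x_0)+\eps q(x_0)v_\eps(x_0)\le0 .
\]
Because $\ell_{e,b}$ is affine, we have $D\psi=e+\eps D\phi$ and $D^2\psi=\eps D^2\phi$. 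We also have $v_\eps(x_0)=\ell_{e,b}(x_0)+\eps z_\eps(x_0)$. Substituting these identities and dividing by $\eps>0$ gives
\[
-(e+\eps D\phi(x_0))^TD^2\phi(x_0)(e+\eps D\phi(x_0))+q(x_0)\bigl(\ell_{e,b}(x_0)+\eps z_\eps(x_0)\bigr)\le0 .
\]
This is exactly $\mathcal F_\eps[\phi](x_0)\le0$, where the zeroth-order term is evaluated with the solution value $z_\eps(x_0)$, as in Definition~\ref{def:viscosity}. The supersolution inequality follows in the same way from local minima, with all inequalities reversed. Continuity of $z_\eps$ on $\overline\Omega$ is immediate from that of $v_\eps$ and $\ell_{e,b}$.

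The only point needing care is interpretive rather than technical. Formula \eqref{eq:F-eps-def} is written with $\eps\phi$ in the zeroth-order term. In the viscosity sense, this term must be read as $F(x,r,p,X)=-(e+\eps p)^TX(e+\eps p)+q(x)(\ell_{e,b}(x)+\eps r)$, evaluated at $r=z_\eps(x_0)$. Under the usual convention one could also use $r=\phi(x_0)$, and the two choices coincide when $\phi$ touches $z_\eps$ at $x_0$. We should therefore state this convention explicitly, and also note that $F$ is degenerate elliptic in $X$ and nondecreasing in $r$. Both properties are needed later when comparison with barriers is applied to $z_\eps$. Beyond this, no obstacle is expected: the argument is a direct algebraic transfer of test functions.
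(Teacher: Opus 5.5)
Your proposal is correct and follows the paper's argument. You transfer test functions via $\psi=\ell_{e,b}+\eps\phi$, use $D\psi=e+\eps D\phi$ and $D^2\psi=\eps D^2\phi$, and divide by $\eps>0$. The only difference is cosmetic: the paper first adds a constant to $\phi$ so that $\phi(x_0)=z_\eps(x_0)$, which makes the zeroth-order term literally $\eps\phi(x_0)$ and sidesteps the convention issue you flag.
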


\begin{proof}
	Suppose that $z_\eps-\phi$ has a local maximum at $x_0$. After adding a constant to the test function and continuing to denote the shifted function by $\phi$, we may assume that $z_\eps(x_0)=\phi(x_0)$. Since $v_\eps=\ell_{e,b}+\eps z_\eps$, the function $v_\eps-(\ell_{e,b}+\eps\phi)$ has a local maximum equal to zero at $x_0$. The viscosity subsolution inequality for \eqref{eq:v-eps-problem} gives
	\[
	-\Dinfty(\ell_{e,b}+\eps\phi)(x_0)+\eps q(x_0)(\ell_{e,b}(x_0)+\eps\phi(x_0))\le0.
	\]
	Since $D\ell_{e,b}=e$ and $D^2\ell_{e,b}=0$, one has
	\[
	\Dinfty(\ell_{e,b}+\eps\phi)=\eps(e+\eps D\phi)^TD^2\phi(e+\eps D\phi).
	\]
	Dividing the viscosity inequality by $\eps>0$ gives $\mathcal F_\eps[\phi](x_0)\le0$.
	
	If $z_\eps-\phi$ has a local minimum at $x_0$, then $v_\eps-(\ell_{e,b}+\eps\phi)$ has a local minimum equal to zero there. The viscosity supersolution inequality gives $\mathcal F_\eps[\phi](x_0)\ge0$. Hence, $z_\eps$ is both a viscosity subsolution and a viscosity supersolution of \eqref{eq:z-exact}.
\end{proof}

Expanding \eqref{eq:F-eps-def} in powers of $\eps$ gives
\begin{equation}\label{eq:F-eps-expansion}
	\mathcal F_\eps[\phi]=-\partial_e^2\phi+q\ell_{e,b}+\eps( q\phi-2D^2\phi[e,D\phi])-\eps^2D^2\phi[D\phi,D\phi].
\end{equation}
The leading part is the equation in \eqref{eq:w-limit-equation}.

Let $m_e=\min_{\overline\Omega}e\cdot x$ and $M_e=\max_{\overline\Omega}e\cdot x$, and define
\begin{equation}\label{eq:slab-barrier}
	h_e(x):=(e\cdot x-m_e)(M_e-e\cdot x).
\end{equation}
Then $h_e\ge0$, $D^2h_e=-2e\otimes e$, and $|\partial_eh_e|\le M_e-m_e$.

\begin{proposition}[Global bound]\label{prop:global-bound}
	There are constants $C>0$ and $\eps_0>0$, depending on $q$, $b$, $e$, and $\Omega$, such that
	\begin{equation}\label{eq:global-v-bound}
		\ell_{e,b}-C\eps h_e\le v_\eps\le\ell_{e,b}+C\eps h_e \quad \text{in }\overline{\Omega},
	\end{equation}
	for $0<\eps\le\eps_0$. Moreover,
	\begin{equation}\label{eq:global-z-bound}
		|z_\eps(x)|\le Ch_e(x)\le\frac C4(M_e-m_e)^2.
	\end{equation}
\end{proposition}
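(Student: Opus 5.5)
The plan is to verify directly that $\psi_\pm=\ell_{e,b}\pm C\eps h_e$ are a viscosity super- and subsolution of \eqref{eq:v-eps-problem}, and then invoke the comparison principle, Proposition~\ref{prop:comparison}. The key inputs are that $\psi_\pm$ are smooth quadratics and $h_e\ge0$. On $\partial\Omega$ both barriers exceed or fall below $\ell_{e,b}$ in the right direction, since $h_e\ge0$ there, so $\psi_-\le\ell_{e,b}\le\psi_+$ on the boundary. Comparison is applied to \eqref{eq:v-eps-problem} with potential $\eps q\ge\eps c_0>0$, which is allowed for each fixed $\eps>0$.

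For the classical computation, write $\psi=\ell_{e,b}+\sigma C\eps h_e$ with $\sigma=\pm1$. Then $D\psi=e+\sigma C\eps\,\partial_e h_e\,e=(1+\sigma C\eps\,\partial_eh_e)e$ and $D^2\psi=-2\sigma C\eps\,e\otimes e$. Hence
\[
\Dinfty\psi=-2\sigma C\eps(1+\sigma C\eps\,\partial_eh_e)^2 .
\]
Choose $\eps_0$ so that $C\eps_0(M_e-m_e)\le 1/2$. Then $(1+\sigma C\eps\partial_eh_e)^2\in[1/4,9/4]$.

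For the supersolution $\psi_+$ ($\sigma=1$), we need $-\Dinfty\psi_++\eps q\psi_+\ge0$. The first term is $\ge 2C\eps\cdot\tfrac14$, and $\eps q\psi_+\ge\eps q\ell_{e,b}>0$ because $\ell_{e,b}>0$ on $\overline\Omega$ and $h_e\ge0$. So $\psi_+$ is a supersolution for any $C>0$. In fact, the positivity of $\ell_{e,b}$ alone already gives $v_\eps\le\ell_{e,b}$ by comparing with $\ell_{e,b}$ itself.

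For the subsolution $\psi_-$ ($\sigma=-1$), we need
\[
-2C\eps(1-C\eps\partial_eh_e)^2+\eps q(\ell_{e,b}-C\eps h_e)\le0 .
\]
It suffices that $\tfrac12 C\ge\|q\|_\infty(b+R_\Omega)$, using the lower bound $1/4$ for the square and $\ell_{e,b}\le b+R_\Omega$. So take $C=2\|q\|_\infty(b+R_\Omega)$ and then fix $\eps_0$ as above. Comparison gives \eqref{eq:global-v-bound}. Dividing by $\eps$ gives $|z_\eps|\le Ch_e$, and the maximum of $(s-m_e)(M_e-s)$ is $(M_e-m_e)^2/4$, which proves \eqref{eq:global-z-bound}.

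The only delicate point is the order in which the constants are chosen: $C$ must come first and $\eps_0$ afterwards, so that the gradient factor stays bounded below. This creates no circularity, because $C$ does not depend on $\eps_0$. Since the barriers are $C^2$, the viscosity inequalities reduce to the pointwise ones via the standard test-function argument.
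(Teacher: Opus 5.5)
Your proposal is correct and follows the paper's proof essentially step for step. It uses the same slab barriers $\ell_{e,b}\pm C\eps h_e$, the same smallness condition $C\eps|\partial_eh_e|\le\frac12$, and comparison with potential $\eps q$. The differences are minor: your choice $C=2\|q\|_\infty(b+R_\Omega)$ gives a non-strict subsolution, which is enough for Proposition~\ref{prop:comparison}, whereas the paper adds $+1$ to make it strict; and your remark that $\ell_{e,b}$ is itself a supersolution yields the sharper one-sided bound $v_\eps\le\ell_{e,b}$.
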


\begin{proof}
	Write $s=e\cdot x$ and set $L_e=M_e-m_e$. By the definition of $m_e$ and $M_e$, one has $m_e\le s\le M_e$ for every $x\in\overline\Omega$. Hence, $h_e(x)=(s-m_e)(M_e-s)\ge0$. A direct calculation gives $\partial_eh_e=M_e+m_e-2e\cdot x$, $Dh_e=(\partial_eh_e)e$ and $D^2h_e=-2e\otimes e$. Since $m_e\le e\cdot x\le M_e$, one has
	\begin{equation}\label{eq:slab-derivative-bound}
		|\partial_eh_e|\le M_e-m_e=L_e \quad \text{in }\overline{\Omega}.
	\end{equation}
	Notice also that $\ell_{e,b}(x)=e\cdot x+b\ge b-|x|\ge b-R_\Omega>0$ in $\overline\Omega$.
	
	Set $Q_0=\|q\|_{L^\infty(\Omega)}$, $B_0=\|\ell_{e,b}\|_{L^\infty(\Omega)}$ and choose $C=2Q_0B_0+1$. Since $\Omega$ is a nonempty open set, its width in the direction $e$ is positive, so $L_e>0$. We next choose $\eps_0=\min\{1,\frac1{2CL_e}\}$. For $0<\eps\le\eps_0$, estimate \eqref{eq:slab-derivative-bound} gives
	\begin{equation}\label{eq:slab-smallness}
		C\eps|\partial_eh_e|\le C\eps L_e\le\frac12.
	\end{equation}
	
	Define $\underline v_\eps=\ell_{e,b}-C\eps h_e$ and $\overline v_\eps=\ell_{e,b}+C\eps h_e$ to be the lower and upper barriers, respectively. We first consider the lower barrier. Since $D\ell_{e,b}=e$ and $D^2\ell_{e,b}=0$, one has $D\underline v_\eps=e-C\eps Dh_e=(1-C\eps\partial_eh_e)e$ and $D^2\underline v_\eps=-C\eps D^2h_e=2C\eps e\otimes e$. Using $|e|=1$, we obtain
	\[
	\begin{aligned}
		\Dinfty\underline v_\eps=(D\underline v_\eps)^TD^2\underline v_\eps D\underline v_\eps=\bigl((1-C\eps\partial_eh_e)e\bigr)^T(2C\eps e\otimes e)\bigl((1-C\eps\partial_eh_e)e\bigr)=2C\eps(1-C\eps\partial_eh_e)^2.
	\end{aligned}
	\]
	Therefore,
	\begin{equation}\label{eq:lower-slab-computation}
		-\Dinfty\underline v_\eps+\eps q\underline v_\eps
		=-2C\eps(1-C\eps\partial_eh_e)^2+\eps q(\ell_{e,b}-C\eps h_e).
	\end{equation}
	By \eqref{eq:slab-smallness}, $1-C\eps\partial_eh_e\ge1-C\eps|\partial_eh_e|\ge\frac12$, and hence, $-2C\eps(1-C\eps\partial_eh_e)^2\le-2C\eps\big(\frac12\big)^2=-\frac C2\eps$.
	Since $q\ge0$ and $h_e\ge0$, one also has $q(\ell_{e,b}-C\eps h_e)\le q\ell_{e,b}\le Q_0B_0$. Substitution into \eqref{eq:lower-slab-computation} yields
	\[
	\begin{aligned}
		-\Dinfty\underline v_\eps+\eps q\underline v_\eps \le-\frac C2\eps+Q_0B_0\eps=\Big(-\frac{2Q_0B_0+1}{2}+Q_0B_0\Big)\eps=-\frac{\eps}{2}<0.
	\end{aligned}
	\]
	Thus, $\underline v_\eps$ is a classical subsolution, and therefore also a viscosity subsolution, of \eqref{eq:v-eps-problem}.
	
	We next consider the upper barrier. One has
	\[
	D\overline v_\eps=e+C\eps Dh_e=(1+C\eps\partial_eh_e)e \quad \text{and}\quad 
	D^2\overline v_\eps=C\eps D^2h_e=-2C\eps e\otimes e.
	\]
	It follows that $\Dinfty\overline v_\eps=-2C\eps(1+C\eps\partial_eh_e)^2$, so
	\begin{equation}\label{eq:upper-slab-computation}
		-\Dinfty\overline v_\eps+\eps q\overline v_\eps
		=2C\eps(1+C\eps\partial_eh_e)^2+\eps q(\ell_{e,b}+C\eps h_e).
	\end{equation}
	The first term on the right-hand side is nonnegative. The second term is strictly positive because $q\ge c_0>0$, $\ell_{e,b}>0$, and $h_e\ge0$. More precisely, $q(\ell_{e,b}+C\eps h_e)\ge q\ell_{e,b}\ge c_0(b-R_\Omega)>0$. Thus, \eqref{eq:upper-slab-computation} gives $-\Dinfty\overline v_\eps+\eps q\overline v_\eps\ge\eps c_0(b-R_\Omega)>0$. It follows that $\overline v_\eps$ is a classical supersolution, and therefore also a viscosity supersolution, of \eqref{eq:v-eps-problem}.
	
	On $\partial\Omega$, the boundary condition for $v_\eps$ and the inequality $h_e\ge0$ give
	\[
	\underline v_\eps=\ell_{e,b}-C\eps h_e\le\ell_{e,b}=v_\eps\le\ell_{e,b}+C\eps h_e=\overline v_\eps.
	\]
	The potential in \eqref{eq:v-eps-problem} is $\eps q$, and it satisfies $\eps q\ge\eps c_0>0$. Proposition~\ref{prop:comparison} may therefore be applied first to $\underline v_\eps$ and $v_\eps$, and then to $v_\eps$ and $\overline v_\eps$. We obtain $\underline v_\eps\le v_\eps\le\overline v_\eps$ in $\overline\Omega$, which proves \eqref{eq:global-v-bound}. By the definition $z_\eps=(v_\eps-\ell_{e,b})/\eps$, estimate \eqref{eq:global-v-bound} gives $-Ch_e\le z_\eps\le Ch_e$. Hence, $|z_\eps|\le Ch_e$. Finally, completing the square gives $h_e(x)=\frac{(M_e-m_e)^2}{4}-\left(e\cdot x-\frac{M_e+m_e}{2}\right)^2\le\frac{(M_e-m_e)^2}{4}$. This proves \eqref{eq:global-z-bound}.
\end{proof}

\section{Local chord-tube barriers and Dirichlet-to-Neumann asymptotics}\label{sec:boundary-asymptotics}

Fix $y_0\in Y_e$. In a neighborhood of $y_0$, write $\alpha=\alpha_e$ and $\beta=\beta_e$. Choose $\delta>0$ as in Lemma~\ref{lem:chord-geometry} and set
\begin{equation}\label{eq:T-delta}
	T_\delta=\big\{y+se:\, y\in B_\delta^\perp(y_0),\ \alpha(y)<s<\beta(y)\big\}.
\end{equation}
The two physical faces are
\[
\Gamma_-^\delta=\big\{y+\alpha(y)e:\, y\in B_\delta^\perp(y_0)\big\},\quad \Gamma_+^\delta=\big\{y+\beta(y)e:\, y\in B_\delta^\perp(y_0)\big\},
\]
and the artificial lateral boundary is
\[
\Sigma_\delta=\big\{y+se:\, y\in\partial B_\delta^\perp(y_0),\ \alpha(y)\le s\le\beta(y)\big\}.
\]
Define
\begin{equation}\label{eq:eta-chi}
	\eta(y)=K_0|y-y_0|^2,\quad \chi(y,s)=(s-\alpha(y))(\beta(y)-s),
\end{equation}
then the function $\eta$ is independent of the variable $s$, while direct differentiation gives $\partial_e^2\chi=\partial_s^2\chi=-2$. Moreover, $\chi\ge0$ in $\overline{T_\delta}$ and $\chi=0$ on the two physical faces.

\begin{lemma}[Local chord-tube barrier]\label{lem:tube-barrier}
	Let $z_\eps$ be defined by \eqref{eq:z-eps-def} and let $w=w_{q,e,b}$. There are constants $K_0>0$, $A>0$, and $\eps_1>0$, independent of $\eps$, such that
	\begin{equation}\label{eq:tube-trapping}
		w-\eta-A\eps\chi\le z_\eps\le w+\eta+A\eps\chi
	\end{equation}
	in $\overline{T_\delta}$ for $0<\eps\le\eps_1$. In particular,
	\begin{equation}\label{eq:central-chord-estimate}
		|z_\eps(y_0+se)-w(y_0+se)|\le A\eps(s-\alpha_0)(\beta_0-s),
	\end{equation}
	where $\alpha_0=\alpha(y_0)$ and $\beta_0=\beta(y_0)$.
\end{lemma}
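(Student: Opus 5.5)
The plan is to apply the comparison principle on the tube $T_\delta$ to the rescaled problem, using the barriers
\[
\overline v=\ell_{e,b}+\eps(w+\eta+A\eps\chi),\qquad \underline v=\ell_{e,b}+\eps(w-\eta-A\eps\chi).
\]
These are $C^2$ on $\overline{T_\delta}$, since $w$ is $C^2$ on the closed tube by Lemma~\ref{lem:correction-formula}. Working with $v_\eps$ rather than $z_\eps$ lets us use Proposition~\ref{prop:comparison} directly, with potential $\eps q\ge\eps c_0>0$. The conclusion then transfers to $z_\eps$ because $v_\eps=\ell_{e,b}+\eps z_\eps$. Equivalently, by the computation in Lemma~\ref{lem:z-equation}, it suffices to show that
\[
\mathcal F_\eps[w+\eta+A\eps\chi]\ge0\quad\text{and}\quad \mathcal F_\eps[w-\eta-A\eps\chi]\le0
\]
hold classically in $T_\delta$.

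\textbf{Boundary ordering on $\partial T_\delta$.} On the physical faces $\Gamma_\pm^\delta$ we have $z_\eps=0=w=\chi$ and $\eta\ge0$, so both inequalities hold there. On the lateral boundary $\Sigma_\delta$ we have $\eta=K_0\delta^2$. Proposition~\ref{prop:global-bound} gives $|z_\eps|\le C(M_e-m_e)^2/4$, and \eqref{eq:w-global-bound} bounds $|w|$. Choosing $K_0$ so that $K_0\delta^2\ge\|z_\eps\|_\infty+\|w\|_\infty$ therefore gives the ordering on $\Sigma_\delta$, with $\chi\ge0$ helping. This fixes $K_0$ independently of $\eps$ and $A$.

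\textbf{Interior inequality.} Take $\phi=w+\eta+A\eps\chi$ and use the expansion \eqref{eq:F-eps-expansion}. Since $-\partial_e^2w+q\ell_{e,b}=0$, $\partial_e^2\eta=0$ and $\partial_e^2\chi=-2$, the leading part is $-\partial_e^2\phi+q\ell_{e,b}=2A\eps$. The remainder is
\[
\eps\bigl(q\phi-2D^2\phi[e,D\phi]\bigr)-\eps^2D^2\phi[D\phi,D\phi].
\]
Here $D\phi$ and $D^2\phi$ consist of $\eps$-independent pieces from $w,\eta$ (bounded via $\|w\|_{C^2(\overline{T_\delta})}$, $K_0$ and $\delta$) plus $A\eps$ times pieces from $\chi$ (bounded via $\|\alpha\|_{C^2}$ and $\|\beta\|_{C^2}$ on the closed ball). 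So the remainder is at most $\eps M_1+\eps^2 A M_2+\eps^3A^2M_3$, with $M_i$ independent of $\eps$ and $A$. We then choose $A\ge M_1$ and take $\eps_1$ small enough that $\eps_1 AM_2+\eps_1^2A^2M_3\le A/2$. This gives $\mathcal F_\eps[\phi]\ge 2A\eps-\eps M_1-\eps A/2\ge A\eps/2>0$. The subsolution case $\phi=w-\eta-A\eps\chi$ is symmetric, giving $\mathcal F_\eps[\phi]\le -A\eps/2$. Comparison then yields \eqref{eq:tube-trapping}. Evaluating at $y=y_0$, where $\eta=0$, gives \eqref{eq:central-chord-estimate}.

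\textbf{Main obstacle.} The delicate point is the order of constants, and in particular the coupling $\eps A$ in the higher-order terms. $K_0$ must be fixed first from the global bounds, then $A$ from $M_1$ (which depends on $K_0$), and only then $\eps_1$. One must also check that $\eta$ contributes no $O(1)$ error: this holds because $\eta$ is independent of $s$, so $\partial_e^2\eta=0$, and its cross terms carry a factor $\eps$. A further requirement is that all $C^2$ norms are taken on the closed tube, where $L(y)$ is bounded below by Lemma~\ref{lem:chord-geometry}(iii).
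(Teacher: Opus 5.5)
Your proposal is correct and follows essentially the same route as the paper: the same barriers $w\pm\eta\pm A\eps\chi$, with $K_0$ fixed first from the uniform bound of Proposition~\ref{prop:global-bound}, then $A$, then $\eps_1$. Your remainder bookkeeping tacitly needs $\eps_1\le\eps_0$, $\eps\le1$ and $A\eps\le1$, which the paper imposes explicitly. The only other difference is cosmetic: the paper closes with a direct strict-maximum argument for $z_\eps$ using the exact equation of Lemma~\ref{lem:z-equation}. You instead apply Proposition~\ref{prop:comparison} on $T_\delta$ to $v_\eps$ and $\ell_{e,b}+\eps(w\pm\eta\pm A\eps\chi)$, which is equally valid since $T_\delta$ is a bounded open set and $\eps q\ge\eps c_0>0$.
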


\begin{proof}
	Let $M>0$ be the uniform bound for $z_\eps$ supplied by Proposition~\ref{prop:global-bound}, so that $|z_\eps|\le M$ in $\overline\Omega$ for all sufficiently small $\eps$, and let $W=\|w\|_{L^\infty(T_\delta)}$. We first choose $K_0>0$ so that
	\begin{equation}\label{eq:K-choice}
		K_0\delta^2\ge M+W+1.
	\end{equation}
	For a constant $A>0$ to be chosen later, define
	\begin{equation}\label{eq:Phi-pm}
		\Phi_\eps^+=w+\eta+A\eps\chi,\quad \Phi_\eps^-=w-\eta-A\eps\chi.
	\end{equation}
	We will show that $\Phi_\eps^+$ is a strict supersolution and $\Phi_\eps^-$ is a strict subsolution of the exact equation \eqref{eq:z-exact}. Assume temporarily that $A\eps\le1$. For $j=0,1,2$, set
	\[
	B_j=\|D^jw\|_{L^\infty(T_\delta)}+\|D^j\eta\|_{L^\infty(T_\delta)}+\|D^j\chi\|_{L^\infty(T_\delta)},
	\]
	where $D^0$ denotes the function itself and the derivatives are taken with respect to the Euclidean variables $x=y+se$. Once $K_0$ has been fixed, the constants $B_j$ are independent of $A$ and $\eps$. Since $A\eps\le1$, one has $\|D^j\Phi_\eps^\pm\|_{L^\infty(T_\delta)}\le B_j$, $j=0,1,2$.
	
	Recall that the function $w$ satisfies $-\partial_e^2w+q\ell_{e,b}=0$, while $\partial_e^2\eta=0$ and $\partial_e^2\chi=-2$. It follows that $-\partial_e^2\Phi_\eps^++q\ell_{e,b}=2A\eps$ and $-\partial_e^2\Phi_\eps^-+q\ell_{e,b}=-2A\eps$. Using the expansion \eqref{eq:F-eps-expansion}, together with the bounds $|q\Phi_\eps^\pm|\le\|q\|_{L^\infty(T_\delta)}B_0$, $2|D^2\Phi_\eps^\pm[e,D\Phi_\eps^\pm]|\le2B_1B_2$, and $|D^2\Phi_\eps^\pm[D\Phi_\eps^\pm,D\Phi_\eps^\pm]|\le B_1^2B_2$, we obtain
	\begin{equation}\label{eq:strict-upper-lower-estimates}
		\mathcal F_\eps[\Phi_\eps^+]\ge2A\eps-C_1\eps-C_2\eps^2,\quad \mathcal F_\eps[\Phi_\eps^-]\le-2A\eps+C_1\eps+C_2\eps^2,
	\end{equation}
	where $C_1=\|q\|_{L^\infty(T_\delta)}B_0+2B_1B_2$ and $C_2=B_1^2B_2$. Now, we choose $A=C_1+C_2+1$, and then select $0<\eps_1\le\min\{\eps_0,1,A^{-1}\}$, where $\eps_0$ is the constant given in Proposition~\ref{prop:global-bound}. For $0<\eps\le\eps_1$, one has $A\eps\le1$ and $2A-C_1-C_2\eps\ge2A-C_1-C_2=C_1+C_2+2>0$. Therefore, with these choices at hand, the inequalities \eqref{eq:strict-upper-lower-estimates} read that 
	\begin{equation}\label{eq:strict-barrier-signs}
		\mathcal F_\eps[\Phi_\eps^+]>0,\quad \mathcal F_\eps[\Phi_\eps^-]<0 \quad \text{in }T_\delta.
	\end{equation}
		
	We next verify the ordering on the boundary of the tube. On the physical faces $\Gamma_-^\delta\cup\Gamma_+^\delta$, one has $z_\eps=0$, $w=0$, and $\chi=0$. Hence,
	\[
	\Phi_\eps^-=-\eta\le0=z_\eps\le\eta=\Phi_\eps^+.
	\]
	On the lateral boundary $\Sigma_\delta$, one has $\eta=K_0\delta^2$. Since $\chi\ge0$, the choice \eqref{eq:K-choice} gives
	\[
	\Phi_\eps^-\le W-K_0\delta^2\le-M-1\le z_\eps\leq M+1 \leq -W+K_0\delta^2 \leq \Phi_\eps^+.
	\]
	Thus, $\Phi_\eps^-\le z_\eps\le\Phi_\eps^+$ on $\partial T_\delta$.
	
	Suppose that $m=\max_{\overline{T_\delta}}(z_\eps-\Phi_\eps^+)>0$, and let $x_*$ be a point where the maximum is attained. The boundary ordering implies that $x_*\in T_\delta$. By the definition of $m$, one has $z_\eps-\Phi_\eps^+-m\le0$ in $\overline{T_\delta}$, with equality at $x_*$. Hence, $\Phi_\eps^++m$ touches $z_\eps$ from above at $x_*$. Since $z_\eps$ is a viscosity subsolution of \eqref{eq:z-exact}, $\mathcal F_\eps[\Phi_\eps^++m](x_*)\le0$. Adding the constant $m$ does not change the first or second derivatives, while the zeroth-order term changes by $\eps q(x)m$. Therefore,
	\[
	0\ge\mathcal F_\eps[\Phi_\eps^++m](x_*)=\mathcal F_\eps[\Phi_\eps^+](x_*)+\eps q(x_*)m>0,
	\]
	where the last inequality follows from \eqref{eq:strict-barrier-signs}, $q>0$, and $m>0$. This contradiction proves that $z_\eps\le\Phi_\eps^+$ in $\overline{T_\delta}$.
	
	Similarly, suppose that $m=\max_{\overline{T_\delta}}(\Phi_\eps^--z_\eps)>0$, and let $x^*\in\overline{T_\delta}$ be a point where this maximum is attained. The boundary ordering implies that $x^*\in T_\delta$. Then $\Phi_\eps^--m$ touches $z_\eps$ from below at $x^*$. Since $z_\eps$ is a viscosity supersolution of \eqref{eq:z-exact},
	\[
	0\le\mathcal F_\eps[\Phi_\eps^--m](x^*)=\mathcal F_\eps[\Phi_\eps^-](x^*)-\eps q(x^*)m<0,
	\]
	where the last inequality follows from \eqref{eq:strict-barrier-signs}, $q>0$, and $m>0$. This is impossible. Therefore, $\Phi_\eps^-\le z_\eps$ in $\overline{T_\delta}$, and \eqref{eq:tube-trapping} follows.
	
	On the central chord $y=y_0$, one has $\eta(y_0)=0$ and $\chi(y_0,s)=(s-\alpha_0)(\beta_0-s)$. Restricting \eqref{eq:tube-trapping} to this chord gives
	\[
	w(y_0+se)-A\eps(s-\alpha_0)(\beta_0-s)\le z_\eps(y_0+se)\le w(y_0+se)+A\eps(s-\alpha_0)(\beta_0-s),
	\]
	which is equivalent to \eqref{eq:central-chord-estimate}.
\end{proof}

\begin{remark}[Structure of the barriers]\label{rem:tube-barrier-structure}
	The transverse term $\eta$ does not contribute to the limiting operator $-\partial_e^2$ and separates the upper and lower barriers from $z_\eps$ on the artificial lateral boundary. The chord factor $\chi$ vanishes on the physical faces and satisfies $\partial_e^2\chi=-2$, which produces the strict signs in \eqref{eq:strict-barrier-signs}. No positive lower bound for the curvature of $\partial\Omega$ is used.
\end{remark}

Let $L_0=\beta_0-\alpha_0$, $x_-=y_0+\alpha_0e$, and $x_+=y_0+\beta_0e$.

\begin{proposition}[Endpoint derivative estimate]\label{prop:endpoint-derivative}
	For $0<\eps\le\eps_1$,
	\begin{equation}\label{eq:e-derivative-estimate}
		|\partial_ez_\eps(x_\pm)-\partial_ew_{q,e,b}(x_\pm)|\le A\eps L_0.
	\end{equation}
	Moreover,
	\begin{equation}\label{eq:normal-derivative-estimate}
		|\partial_\nu z_\eps(x_\pm)-\partial_\nu w_{q,e,b}(x_\pm)|\le\frac{A\eps L_0}{|e\cdot\nu(x_\pm)|}.
	\end{equation}
\end{proposition}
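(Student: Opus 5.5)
The plan is to differentiate the central chord estimate \eqref{eq:central-chord-estimate} of Lemma~\ref{lem:tube-barrier} at the two endpoints using one-sided difference quotients, and then convert directional derivatives into normal derivatives using the boundary differentiability of $z_\eps$ and $w_{q,e,b}$.

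First, I need $\partial_e z_\eps(x_\pm)$ to make sense. By Proposition~\ref{prop:boundary-differentiability}, applied to $u_{t,e,b}^q$ with the $C^1$ boundary datum $t\ell_{e,b}|_{\partial\Omega}$, the function $v_\eps$ is differentiable at every boundary point in the sense of \eqref{eq:boundary-differentiability-definition}. Since $\ell_{e,b}$ is affine, $z_\eps=(v_\eps-\ell_{e,b})/\eps$ is also differentiable there, with gradient $Dz_\eps(x_\pm)$. Because $x_-+he\in\Omega$ for small $h>0$, the one-sided quotient gives
\[
\partial_ez_\eps(x_-)=\lim_{h\downarrow0}\frac{z_\eps(x_-+he)}{h},
\]
using $z_\eps(x_-)=0$. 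Similarly, $\partial_ez_\eps(x_+)=\lim_{h\downarrow0}\bigl(-z_\eps(x_+-he)\bigr)/h$. The same formulas hold for $w=w_{q,e,b}$, which is $C^2$ up to the faces by Lemma~\ref{lem:correction-formula} and vanishes at $x_\pm$.

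Next, set $s=\alpha_0+h$ in \eqref{eq:central-chord-estimate}. This gives
\[
\Bigl|\frac{z_\eps(x_-+he)}{h}-\frac{w(x_-+he)}{h}\Bigr|\le A\eps\,(L_0-h).
\]
Letting $h\downarrow0$ yields $|\partial_ez_\eps(x_-)-\partial_ew(x_-)|\le A\eps L_0$. The endpoint $x_+$ is handled the same way with $s=\beta_0-h$, which gives \eqref{eq:e-derivative-estimate}.

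For \eqref{eq:normal-derivative-estimate}, observe that both $z_\eps$ and $w$ vanish on $\partial\Omega$ near $x_\pm$: for $w$ this holds on the faces $\Gamma_\pm^\delta$, and for $z_\eps$ on all of $\partial\Omega$. Since both are differentiable at $x_\pm$ relative to $\overline\Omega$ and $\partial\Omega$ is $C^1$, their gradients annihilate every tangent vector. To see this, take a $C^1$ boundary curve $\gamma$ with $\gamma(0)=x_\pm$; then $0=z_\eps(\gamma(\tau))=Dz_\eps(x_\pm)\cdot\gamma'(0)\tau+o(\tau)$. Hence both gradients are parallel to $\nu(x_\pm)$, and $\partial_e=(e\cdot\nu)\partial_\nu$ for each function, as in \eqref{eq:w-normal-relation}. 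Dividing \eqref{eq:e-derivative-estimate} by $|e\cdot\nu(x_\pm)|$, which is nonzero by Lemma~\ref{lem:chord-geometry}(ii), gives the claim.

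The main point needing care is the first step: the derivative of $z_\eps$ at the boundary must be the full boundary gradient from Hong's theorem (Proposition~\ref{prop:boundary-differentiability}), not merely a directional one-sided limit. This is what allows both the identification with the difference quotient along $e$ and the tangential vanishing argument. Everything else is elementary.
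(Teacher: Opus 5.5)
Your proposal is correct and follows the paper's proof essentially step for step. You obtain boundary differentiability of $v_\eps$, and hence of $z_\eps$, from Hong's theorem (Proposition~\ref{prop:boundary-differentiability}), take one-sided difference quotients of \eqref{eq:central-chord-estimate} at $s=\alpha_0+h$ and $s=\beta_0-h$, and then convert via $\partial_e=(e\cdot\nu)\partial_\nu$ using the vanishing of both functions on the boundary near $x_\pm$; your $C^1$-curve argument for the vanishing tangential derivatives just spells out a step the paper asserts directly.
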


\begin{proof}
	The function $v_\eps$ satisfies $\Dinfty v_\eps=\eps qv_\eps$ with affine boundary values. Its right-hand side is bounded and continuous, and Proposition~\ref{prop:boundary-differentiability} shows that $v_\eps$ is differentiable at $x_-$ and $x_+$. Since $z_\eps=\frac{v_\eps-\ell_{e,b}}{\eps}$, the function $z_\eps$ is differentiable at the same points. The correction $w_{q,e,b}$ is $C^2$ near both endpoints by Lemma~\ref{lem:correction-formula}.
	
	Set $g_\eps=z_\eps-w_{q,e,b}$. Both functions vanish at $x_+$ and $x_-$, so $g_\eps(x_+)=g_\eps(x_-)=0$. For $0<h<L_0$, apply \eqref{eq:central-chord-estimate} at $x_+-he=y_0+(\beta_0-h)e$. Since $\beta_0-(\beta_0-h)=h$ and $(\beta_0-h)-\alpha_0=L_0-h$, one obtains $|g_\eps(x_+-he)|\le A\eps h(L_0-h)$. Using $g_\eps(x_+)=0$ and dividing by $h>0$ gives
    \begin{equation}\label{eq:estimate_differe_quo_g}
    	\Big|\frac{g_\eps(x_+)-g_\eps(x_+-he)}{h}\Big|\le A\eps(L_0-h).
    \end{equation}
	As $h\downarrow0$, boundary differentiability implies
	\[
	\frac{g_\eps(x_+)-g_\eps(x_+-he)}{h}\longrightarrow Dg_\eps(x_+)\cdot e=\partial_ez_\eps(x_+)-\partial_ew_{q,e,b}(x_+).
	\]
	Therefore, together with \eqref{eq:estimate_differe_quo_g}, we obtain $|\partial_ez_\eps(x_+)-\partial_ew_{q,e,b}(x_+)|\le A\eps L_0$. 
	
	Similarly, at the lower endpoint, apply \eqref{eq:central-chord-estimate} at $x_-+he=y_0+(\alpha_0+h)e$. Since $(\alpha_0+h)-\alpha_0=h$ and $\beta_0-(\alpha_0+h)=L_0-h$, one obtains $|g_\eps(x_-+he)|\le A\eps h(L_0-h)$. Using $g_\eps(x_-)=0$ gives
	\[
	\Big|\frac{g_\eps(x_-+he)-g_\eps(x_-)}{h}\Big|\le A\eps(L_0-h).
	\]
	Letting $h\downarrow0$ yields $|\partial_ez_\eps(x_-)-\partial_ew_{q,e,b}(x_-)|\le A\eps L_0$. This proves \eqref{eq:e-derivative-estimate} at both endpoints.
	
	The functions $z_\eps$ and $w_{q,e,b}$ vanish on the corresponding physical boundary faces. Their tangential derivatives therefore vanish at $x_\pm$, and their gradients are parallel to the outward unit normal. Hence,
	\[
	\partial_ez_\eps(x_\pm)=(e\cdot\nu(x_\pm))\partial_\nu z_\eps(x_\pm) \quad \text{and}\quad 
	\partial_ew_{q,e,b}(x_\pm)=(e\cdot\nu(x_\pm))\partial_\nu w_{q,e,b}(x_\pm).
	\]
	Subtracting these identities and using $e\cdot\nu(x_\pm)\ne0$ gives
	\[
	|\partial_\nu z_\eps(x_\pm)-\partial_\nu w_{q,e,b}(x_\pm)|=\frac{|\partial_ez_\eps(x_\pm)-\partial_ew_{q,e,b}(x_\pm)|}{|e\cdot\nu(x_\pm)|}.
	\]
	Estimate \eqref{eq:normal-derivative-estimate} now follows from \eqref{eq:e-derivative-estimate}.
\end{proof}

\begin{proposition}[Uniformity away from glancing points]\label{prop:local-uniformity}
	Let $K\Subset\Gamma_e$. There are $C_K>0$ and $\eps_K>0$ such that
	\begin{equation}\label{eq:uniform-normal-z}
		\sup_{x\in K}|\partial_\nu z_\eps(x)-\partial_\nu w_{q,e,b}(x)|\le C_K\eps
	\end{equation}
	for $0<\eps\le\eps_K$.
\end{proposition}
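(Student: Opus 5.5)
The plan is to run the argument of Lemma~\ref{lem:tube-barrier} and Proposition~\ref{prop:endpoint-derivative} with the central chord replaced by an arbitrary chord in a tube, and then cover $K$ by finitely many such tubes. The estimate in Proposition~\ref{prop:endpoint-derivative} is pointwise only because it was applied on the central chord $y=y_0$. The barrier $\eta(y)=K_0|y-y_0|^2$, however, can be recentred at any $y_1$ near $y_0$, so the same constants should work for every chord near $y_0$.

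First step: uniform tube constants. By Lemma~\ref{lem:Gamma-chords}, every $x\in K$ is an endpoint $x_e^\pm(y)$ with $y=P_ex\in Y_e$. The map $x\mapsto P_ex$ is continuous, so $P_e(K)$ is a compact subset of $Y_e$. For $y_0\in P_e(K)$, choose $\delta=\delta(y_0)$ as in Lemma~\ref{lem:chord-geometry}(iii). For $y_1\in B^\perp_{\delta/2}(y_0)$, consider the tube $T^{y_1}_{\delta/2}$ of radius $\delta/2$ around $y_1$. It is contained in $T_\delta$ around $y_0$, so Lemma~\ref{lem:chord-geometry} applies to it. Repeat the construction of Lemma~\ref{lem:tube-barrier} with $\eta_{y_1}(y)=K_0|y-y_1|^2$, where $K_0(\delta/2)^2\ge M+W+1$ and $W=\|w\|_{L^\infty(T_\delta)}$. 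The constants $B_j$ can be bounded by the $C^2$ norms of $w$, $\eta_{y_1}$ and $\chi$ on the closed big tube $\overline{T_\delta}$. These are finite by Lemma~\ref{lem:correction-formula}, and $\eta_{y_1}$ has bounded derivatives uniformly in $y_1$. Hence $A$ and $\eps_1$ can be chosen independently of $y_1$. The comparison argument is identical and gives
\[
|z_\eps(y_1+se)-w(y_1+se)|\le A\eps\,(s-\alpha(y_1))(\beta(y_1)-s)
\]
for all $y_1\in B^\perp_{\delta/2}(y_0)$ and $0<\eps\le\eps_1$.

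Second step: endpoint derivatives. The difference-quotient argument of Proposition~\ref{prop:endpoint-derivative} then gives
\[
|\partial_\nu z_\eps(x_e^\pm(y_1))-\partial_\nu w(x_e^\pm(y_1))|\le \frac{A\eps L(y_1)}{|e\cdot\nu(x_e^\pm(y_1))|}.
\]
Boundary differentiability at these points comes from Proposition~\ref{prop:boundary-differentiability}. On $\overline{B^\perp_{\delta/2}(y_0)}$, the chord length satisfies $L\le\diam\Omega$. By Lemma~\ref{lem:chord-geometry}(iv), $|e\cdot\nu_\pm|=(1+|D\beta|^2)^{-1/2}$ or $(1+|D\alpha|^2)^{-1/2}$. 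Since $\alpha,\beta\in C^2$ on the closed ball, these quantities have a positive lower bound there. Alternatively, continuity of $e\cdot\nu$ and compactness of $K\subset\Gamma_e$ give $\inf_K|e\cdot\nu|>0$ directly.

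Third step: covering. Cover the compact set $P_e(K)$ by finitely many balls $B^\perp_{\delta_i/2}(y_i)$. Take $C_K$ to be the largest of the resulting constants, and $\eps_K$ the smallest of the $\eps_1$'s.

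The main obstacle is the uniformity in the first step. Proposition~\ref{prop:endpoint-derivative} was proved only on the central chord, and a naive restriction of \eqref{eq:tube-trapping} to off-centre chords picks up the non-vanishing term $\eta$, which destroys the endpoint estimate. The recentring of $\eta$ fixes this. What must be checked is that $A$ and $\eps_1$ depend only on $C^2$ norms over the fixed closed tube $\overline{T_\delta}$, together with the global bound $M$ from Proposition~\ref{prop:global-bound}, and not on the centre $y_1$.
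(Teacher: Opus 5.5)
Your proposal is correct and follows essentially the same route as the paper. You recentre the tube barrier of Lemma~\ref{lem:tube-barrier} at the chord through each point of $P_e(K)$, note that $K_0$, $A$, and $\eps_1$ depend only on $M$ and on $C^2$ bounds for $w,\alpha_e,\beta_e$ over a fixed compact region, and then divide by $\inf_K|e\cdot\nu|>0$. The difference is cosmetic: the paper gets a single tube radius $\delta$ for all centres from nested sets $P_e(K)\Subset U_1\Subset U_2\Subset Y_e$, whereas you use local radii, half-size recentred tubes, and a finite subcover.
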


\begin{proof}
	By Lemma~\ref{lem:Gamma-chords}, every point of $K$ is an endpoint of a positive length chord. Hence, $Z=P_e(K)$ is a compact subset of the open set $Y_e$. Choose open sets $U_1,U_2\subset e^\perp$ such that $Z\Subset U_1\Subset U_2\Subset Y_e$, and set $\delta:=\frac12\operatorname{dist}(\overline U_1,e^\perp\setminus U_2)>0$, then  $\overline{B_\delta^\perp(y_0)}\subset U_2$ for every $y_0\in Z$. Since $\alpha_e,\beta_e\in C^2(Y_e)$, their $C^2$ norms are bounded on $\overline U_2$. Moreover, $\inf_{y\in\overline U_2}\bigl(\beta_e(y)-\alpha_e(y)\bigr)>0$. The explicit formula \eqref{eq:w-formula}, together with $q\in C^2(\overline\Omega)$, therefore gives a uniform $C^2$ bound for $w_{q,e,b}$ on the family of chord tubes parametrized by $\overline U_2$.
	
	Let $M$ be the global bound for $z_\eps$ from Proposition~\ref{prop:global-bound}, and let $W_2$ be a uniform bound for $w_{q,e,b}$ on these tubes. Choose $K_0$ so that $K_0\delta^2\ge M+W_2+1$. This choice is independent of the central parameter $y_0\in Z$. The $C^2$ norms of $|y-y_0|^2$ on $B_\delta^\perp(y_0)$ are also independent of $y_0$. In addition, the $C^2$ norms of $\chi_{y_0}(y,s)=(s-\alpha_e(y))(\beta_e(y)-s)$ are uniformly bounded by the $C^2$ bounds for $\alpha_e$ and $\beta_e$. It follows that the constants $B_j$, $C_1$, $C_2$, and $A$ in the proof of Lemma~\ref{lem:tube-barrier}, as well as the corresponding threshold $\eps_1$, can be chosen independently of $y_0\in Z$.
	
	The endpoint estimate \eqref{eq:e-derivative-estimate} consequently holds with one constant $A$ for every endpoint in $K$. Since $K\Subset\Gamma_e$ and the outward unit normal is continuous, one has $\kappa_K=\min_{x\in K}|e\cdot\nu(x)|>0$. Every chord has length at most $\diam(\Omega)$, then Proposition~\ref{prop:endpoint-derivative} gives 
	\[
	\sup_{x\in K}|\partial_\nu z_\eps(x)-\partial_\nu w_{q,e,b}(x)|\le\frac{A\diam(\Omega)}{\kappa_K}\eps
	\]
	for all sufficiently small $\eps$. This proves \eqref{eq:uniform-normal-z}.
\end{proof}

\begin{proof}[Proof of Theorem~\ref{thm:asymp-intro}]
	Let $\eps=t^{-2}$. Equations \eqref{eq:v-eps-def} and \eqref{eq:z-eps-def} give the exact identity
	\begin{equation}\label{eq:u-expansion-exact}
		u_{t,e,b}^q=t\ell_{e,b}+t^{-1}z_{t^{-2}}.
	\end{equation}
	All terms in \eqref{eq:u-expansion-exact} are differentiable at the boundary. Taking the outward normal derivative gives
	\begin{equation}\label{eq:DN-exact-z}
		\Lambda_q(t\ell_{e,b})=t\,e\cdot\nu+t^{-1}\partial_\nu z_{t^{-2}}.
	\end{equation}
	For $x\in\Gamma_e$, Proposition~\ref{prop:endpoint-derivative} gives $\partial_\nu z_{t^{-2}}(x)\to\partial_\nu w_{q,e,b}(x)$ as $t\to \infty$. This proves \eqref{eq:H-intro} and \eqref{eq:H-equals-normal-intro}. If $K\Subset\Gamma_e$, Proposition~\ref{prop:local-uniformity} gives $\sup_{x\in K}|\partial_\nu z_{t^{-2}}(x)-\partial_\nu w_{q,e,b}(x)|\le C_Kt^{-2}$ for all sufficiently large $t$. Substitution into \eqref{eq:DN-exact-z} proves \eqref{eq:DN-rate-intro}.
\end{proof}

\begin{corollary}\label{cor:H-coefficient}
	For every $e\in\mathbb S^{n-1}$,
	\begin{equation}\label{eq:H-local-uniform}
		t\left[\Lambda_q(t\ell_{e,b})-t\,e\cdot\nu\right]\longrightarrow\partial_\nu w_{q,e,b} \quad \text{as }t\to \infty
	\end{equation}
	locally uniformly on $\Gamma_e$.
\end{corollary}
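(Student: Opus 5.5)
The corollary is a direct restatement of what was already proved in Theorem~\ref{thm:asymp-intro}. The plan is to read off the convergence from the exact DN identity \eqref{eq:DN-exact-z} together with the uniform endpoint estimate of Proposition~\ref{prop:local-uniformity}.

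First, I would fix $e\in\mathbb S^{n-1}$ and set $\eps=t^{-2}$. By \eqref{eq:DN-exact-z}, for every $x\in\partial\Omega$ and every $t>0$,
\[
t\left[\Lambda_q(t\ell_{e,b})(x)-t\,e\cdot\nu(x)\right]=\partial_\nu z_{t^{-2}}(x).
\]
This identity is exact. It relies only on the scaling Lemma~\ref{lem:amplitude-scaling}, the definition \eqref{eq:z-eps-def}, and the boundary differentiability of Proposition~\ref{prop:boundary-differentiability}, which makes $\partial_\nu z_\eps$ well defined pointwise on $\partial\Omega$. The statement therefore reduces to showing that $\partial_\nu z_\eps\to\partial_\nu w_{q,e,b}$ as $\eps\downarrow0$, locally uniformly on $\Gamma_e$.

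Second, I would fix a compact set $K\Subset\Gamma_e$ and apply Proposition~\ref{prop:local-uniformity}. This gives constants $C_K$ and $\eps_K$ with $\sup_K|\partial_\nu z_\eps-\partial_\nu w_{q,e,b}|\le C_K\eps$ for $0<\eps\le\eps_K$. Translating back with $t=\eps^{-1/2}$, the left-hand side of \eqref{eq:H-local-uniform} differs from $\partial_\nu w_{q,e,b}$ by at most $C_Kt^{-2}$ uniformly on $K$, for $t\ge\eps_K^{-1/2}$. Since $K$ is an arbitrary compact subset, this is exactly local uniform convergence on $\Gamma_e$.

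The only point needing care is that $\partial_\nu w_{q,e,b}$ is a well-defined function on all of $\Gamma_e$. By Lemma~\ref{lem:Gamma-chords}, each $x\in\Gamma_e$ is an endpoint $x_e^\pm(y)$ of a positive length chord with $y\in Y_e$. Lemma~\ref{lem:correction-formula} then shows that $w_{q,e,b}$ is $C^2$ up to that endpoint on a closed chord tube, so the limit function makes sense. Beyond this check there is no real obstacle, since the substantive work, namely the uniform tube barriers in Lemma~\ref{lem:tube-barrier} and the uniform choice of constants, was already done in Proposition~\ref{prop:local-uniformity}.
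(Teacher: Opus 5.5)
Your proposal is correct and is essentially the paper's own argument. The corollary is read off from the exact identity \eqref{eq:DN-exact-z}, i.e.\ $t\bigl[\Lambda_q(t\ell_{e,b})-t\,e\cdot\nu\bigr]=\partial_\nu z_{t^{-2}}$, combined with the uniform $O(\eps)$ bound of Proposition~\ref{prop:local-uniformity} on compact subsets of $\Gamma_e$, exactly as in the proof of Theorem~\ref{thm:asymp-intro}.
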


\section{X-ray reconstruction and uniqueness}\label{sec:recovery}

We now recover the X-ray transform from the coefficient in \eqref{eq:H-intro}.

\begin{lemma}[Endpoint identity]\label{lem:endpoint-identity}
	For $e\in\mathbb S^{n-1}$ and $y\in Y_e$,
	\begin{equation}\label{eq:weighted-Xray}
		I_q(e,b;y)=\int_{\alpha_e(y)}^{\beta_e(y)}q(y+se)(s+b)\,ds.
	\end{equation}
\end{lemma}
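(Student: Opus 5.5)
The plan is to unwind the definition \eqref{eq:I-intro} of $I_q$ using Theorem~\ref{thm:asymp-intro} and the chordwise structure of $w_{q,e,b}$ from Lemma~\ref{lem:correction-formula}.

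Fix $y\in Y_e$ and write $\alpha=\alpha_e(y)$, $\beta=\beta_e(y)$, $x_\pm=x_e^\pm(y)$. By Lemma~\ref{lem:Gamma-chords}, both $x_+$ and $x_-$ lie in $\Gamma_e$. Hence Theorem~\ref{thm:asymp-intro}, in the form \eqref{eq:H-equals-normal-intro}, gives
\[
H_q(e,b;x_\pm)=\partial_\nu w_{q,e,b}(x_\pm).
\]
Substituting this into \eqref{eq:I-intro} yields
\[
I_q(e,b;y)=(e\cdot\nu(x_+))\,\partial_\nu w_{q,e,b}(x_+)-(e\cdot\nu(x_-))\,\partial_\nu w_{q,e,b}(x_-).
\]

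Next I convert normal derivatives into directional derivatives along $e$. Choose a closed chord tube around $y$ as in Lemma~\ref{lem:chord-geometry}(iii). On this tube $w_{q,e,b}$ is $C^2$ up to the two physical faces and vanishes on them, by Lemma~\ref{lem:correction-formula}. Its gradient at $x_\pm$ is therefore parallel to $\nu(x_\pm)$, and \eqref{eq:w-normal-relation} gives
\[
(e\cdot\nu(x_\pm))\,\partial_\nu w_{q,e,b}(x_\pm)=\partial_e w_{q,e,b}(x_\pm).
\]
Consequently $I_q(e,b;y)=\partial_e w(x_+)-\partial_e w(x_-)$.

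Finally, let $W_y(s)=w_{q,e,b}(y+se)$. This function is $C^2$ on the closed interval $[\alpha,\beta]$, with one-sided derivatives at the endpoints equal to $\partial_e w(x_\pm)$, again by the $C^2$ regularity up to the faces. Since $W_y''(s)=q(y+se)(s+b)$ by \eqref{eq:chord-correction}, the fundamental theorem of calculus gives
\[
W_y'(\beta)-W_y'(\alpha)=\int_\alpha^\beta q(y+se)(s+b)\,ds,
\]
which is \eqref{eq:weighted-Xray}.

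The only delicate point is justifying that the endpoint derivatives of $W_y$ coincide with $\partial_e w$ at $x_\pm$, and that the tangential derivatives vanish there. Both follow from the $C^2$-up-to-the-face regularity on a tube whose base is compactly contained in $Y_e$, so no genuine obstacle remains.
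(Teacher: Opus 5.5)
Your proposal is correct and follows essentially the same route as the paper. Both arguments combine three ingredients: the identification $H_q=\partial_\nu w_{q,e,b}$ at the non-glancing endpoints, the relation $\partial_e w_{q,e,b}=(e\cdot\nu)\partial_\nu w_{q,e,b}$ coming from $C^2$ regularity up to the faces, and the fundamental theorem of calculus applied to $W_y''(s)=q(y+se)(s+b)$. The only difference is the order: you substitute $H_q$ into the definition of $I_q$ first and integrate last, whereas the paper integrates the chord equation first.
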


\begin{proof}
	Fix $y\in Y_e$, and write $\alpha=\alpha_e(y)$ and $\beta=\beta_e(y)$. Define the restriction of $w_{q,e,b}$ to the corresponding chord by $W_y(s)=w_{q,e,b}(y+se),\quad \alpha\le s\le\beta$. Since $w_{q,e,b}$ is $C^2$ up to the two endpoints of the chord, the chain rule gives $W_y'(s)=Dw_{q,e,b}(y+se)\cdot e=\partial_e w_{q,e,b}(y+se)$. The chord equation reads $W_y''(s)=q(y+se)(s+b)$. Integrating it from $\alpha$ to $\beta$ gives
	\begin{equation}\label{eq:integrated-chord-equation}
		\partial_e w_{q,e,b}(x_e^+(y))-\partial_e w_{q,e,b}(x_e^-(y))
		=\int_{\alpha_e(y)}^{\beta_e(y)}q(y+se)(s+b)\,ds.
	\end{equation}
	At the two endpoints, \eqref{eq:w-normal-relation} and Theorem~\ref{thm:asymp-intro} give $\partial_e w_{q,e,b}(x_e^\pm(y))=(e\cdot\nu(x_e^\pm(y)))H_q(e,b;x_e^\pm(y))$. Substituting these identities into \eqref{eq:integrated-chord-equation} and using the definition of $I_q(e,b;y)$ proves \eqref{eq:weighted-Xray}.
\end{proof}

\begin{lemma}[Removal of the affine weight]\label{lem:remove-weight}
	For every $e\in\mathbb S^{n-1}$ and $y\in Y_e$,
	\begin{equation}\label{eq:unweighted-chord}
		\int_{\alpha_e(y)}^{\beta_e(y)}q(y+se)\,ds=\frac{I_q(e,b;y)+I_q(-e,b;y)}{2b}.
	\end{equation}
\end{lemma}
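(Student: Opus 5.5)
The plan is to apply Lemma~\ref{lem:endpoint-identity} twice, once in direction $e$ and once in direction $-e$, and add the two identities.

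\emph{Step 1: chord geometry under $e\mapsto -e$.} Since $P_{-e}=P_e$, we have $Y_{-e}=Y_e$, and $y\in Y_e$ is an admissible transverse parameter for both directions. Writing $y+s'(-e)=y+(-s')e$, the chord $\Omega\cap(y+\R e)$ corresponds to $-s'\in(\alpha_e(y),\beta_e(y))$. Hence
\[
\alpha_{-e}(y)=-\beta_e(y),\qquad \beta_{-e}(y)=-\alpha_e(y),
\]
and also $\ell_{-e,b}(y+s'(-e))=s'+b$. The hypothesis $b>R_\Omega$ does not depend on the direction, so Theorem~\ref{thm:asymp-intro} and Lemma~\ref{lem:endpoint-identity} apply verbatim with $-e$ in place of $e$.

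\emph{Step 2: the identity in direction $-e$.} Lemma~\ref{lem:endpoint-identity} gives
\[
I_q(-e,b;y)=\int_{-\beta_e(y)}^{-\alpha_e(y)}q(y-s'e)(s'+b)\,ds'.
\]
Substituting $s=-s'$ turns this into
\[
I_q(-e,b;y)=\int_{\alpha_e(y)}^{\beta_e(y)}q(y+se)(b-s)\,ds.
\]

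\emph{Step 3: adding.} Adding this to \eqref{eq:weighted-Xray} cancels the terms linear in $s$, leaving
\[
I_q(e,b;y)+I_q(-e,b;y)=2b\int_{\alpha_e(y)}^{\beta_e(y)}q(y+se)\,ds.
\]
Dividing by $2b>0$ gives \eqref{eq:unweighted-chord}; positivity of $b$ holds because $b>R_\Omega\ge0$.

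The only point needing care is the endpoint bookkeeping. In direction $-e$ the upper endpoint $x_{-e}^+(y)=y+\beta_{-e}(y)(-e)=y+\alpha_e(y)e$ coincides with $x_e^-(y)$, and similarly $x_{-e}^-(y)=x_e^+(y)$. The definition of $I_q(-e,b;\cdot)$ in \eqref{eq:I-intro} uses the $-e$ endpoints and the factors $(-e)\cdot\nu$. Since Lemma~\ref{lem:endpoint-identity} is stated intrinsically for an arbitrary direction, it can be invoked as a black box, and no sign issue arises beyond the change of variables in Step 2.
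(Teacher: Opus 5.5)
Your proposal is correct and follows the paper's proof: the paper likewise uses $P_{-e}=P_e$ and $\alpha_{-e}=-\beta_e$, $\beta_{-e}=-\alpha_e$, applies Lemma~\ref{lem:endpoint-identity} in direction $-e$, substitutes $r=-s$ to obtain the weight $b-s$, and adds the result to \eqref{eq:weighted-Xray}. Your extra check of the endpoint bookkeeping, $x_{-e}^{\pm}(y)=x_e^{\mp}(y)$, and of $b>0$ is consistent with that argument and does not change it.
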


\begin{proof}
	Since $P_{-e}=P_e$, the same $y$ parametrizes the chord in both orientations. Moreover, $\alpha_{-e}(y)=-\beta_e(y)$ and $\beta_{-e}(y)=-\alpha_e(y)$. Lemma~\ref{lem:endpoint-identity} in the direction $-e$ gives
	\[
	I_q(-e,b;y)=\int_{-\beta_e(y)}^{-\alpha_e(y)}q(y-re)(r+b)\,dr.
	\]
	With the change of variables $r=-s$, this becomes
	\begin{equation}\label{eq:I-minus-e}
		I_q(-e,b;y)=\int_{\alpha_e(y)}^{\beta_e(y)}q(y+se)(b-s)\,ds.
	\end{equation}
	Adding \eqref{eq:weighted-Xray} and \eqref{eq:I-minus-e} proves \eqref{eq:unweighted-chord}.
\end{proof}

Let $Q=q\mathbf 1_\Omega$ be the zero extension of $q$ to $\R^n$. Its X-ray transform is
\begin{equation}\label{eq:Xray-def}
	\Xray Q(e,y)=\int_\R Q(y+se)\,ds,\quad e\in\mathbb S^{n-1},\quad y\in e^\perp.
\end{equation}
If $y\in Y_e$, then the line $y+\R e$ meets $\Omega$ in the chord described by \eqref{eq:chord-parametrization}, and \eqref{eq:unweighted-chord} gives the value of $\Xray Q(e,y)$. If $y\notin\overline{Y_e}$, the line does not meet $\overline\Omega$, and hence, $\Xray Q(e,y)=0$. Finally, if $y\in\partial Y_e$, the line contains no point of the open set $\Omega$. It may meet a flat portion of $\partial\Omega$, but this does not contribute to the integral of the zero extension $Q=q\mathbf 1_\Omega$. Thus, $\Xray Q(e,y)=0$ also for $y\in\partial Y_e$. It follows that \eqref{eq:Xray-reconstruction-intro} holds for every $y\in e^\perp$.

\begin{lemma}[Fourier slice identity]\label{lem:Fourier-slice}
	Let $F\in L^1_c(\R^n)$. For $e\in\mathbb S^{n-1}$ and $\xi\in e^\perp$,
	\begin{equation}\label{eq:Fourier-slice}
		\widehat F(\xi)=\int_{e^\perp}e^{-\mathsf i y\cdot\xi}\Xray F(e,y)\,dy.
	\end{equation}
	Moreover, if $\Xray F(e,y)=0$ for every $e$ and almost every $y\in e^\perp$, then $F=0$ almost everywhere.
\end{lemma}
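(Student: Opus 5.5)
The identity \eqref{eq:Fourier-slice} will follow from Fubini's theorem, using the orthogonal decomposition $x=y+se$ with $y\in e^\perp$ and $s\in\R$. Lebesgue measure on $\R^n$ is the product of $(n-1)$-dimensional measure on $e^\perp$ and $ds$, because the map $(y,s)\mapsto y+se$ is an isometry from $e^\perp\times\R$ onto $\R^n$. Since $F\in L^1(\R^n)$, Tonelli's theorem shows that $s\mapsto F(y+se)$ is integrable for almost every $y\in e^\perp$. Hence $\Xray F(e,\cdot)$ is defined almost everywhere and belongs to $L^1(e^\perp)$, with norm at most $\|F\|_{L^1}$.

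For $\xi\in e^\perp$ one has $x\cdot\xi=y\cdot\xi+s\,e\cdot\xi=y\cdot\xi$, so the phase $e^{-\mathsf i x\cdot\xi}$ does not depend on $s$. The integrand $e^{-\mathsf i x\cdot\xi}F(x)$ is absolutely integrable, and Fubini gives
\[
\widehat F(\xi)=\int_{e^\perp}e^{-\mathsf i y\cdot\xi}\int_\R F(y+se)\,ds\,dy=\int_{e^\perp}e^{-\mathsf i y\cdot\xi}\Xray F(e,y)\,dy .
\]
This is \eqref{eq:Fourier-slice}. The compact support is not needed here; it is used only for the continuity of $\widehat F$ in the uniqueness part.

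For uniqueness, fix $\xi\neq0$. Since $n\ge2$, the hyperplane $\xi^\perp$ contains a unit vector $e_\xi$. The hypothesis $\Xray F(e_\xi,y)=0$ for almost every $y\in e_\xi^\perp$, inserted into \eqref{eq:Fourier-slice}, gives $\widehat F(\xi)=0$. Because $F\in L^1$, the function $\widehat F$ is continuous, so also $\widehat F(0)=0$; for compactly supported $F$ it is even entire. Thus $\widehat F\equiv0$, and injectivity of the Fourier transform on $L^1$ (equivalently on $\mathcal S'$) yields $F=0$ almost everywhere.

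The main point requiring care is the measure-theoretic bookkeeping: $\Xray F(e,y)$ is defined only for almost every $y$, and one must check that the product structure of Lebesgue measure under the decomposition $\R^n=e^\perp\oplus\R e$ justifies Fubini. Everything else is formal.
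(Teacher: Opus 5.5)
Your proof is correct and follows the paper's argument essentially step for step: Fubini under the isometric splitting $x=y+se$, the observation that $x\cdot\xi=y\cdot\xi$ for $\xi\in e^\perp$, and then the vanishing of $\widehat F$ away from the origin, continuity of $\widehat F$ at $\xi=0$, and injectivity of the Fourier transform on $L^1$. One small inaccuracy: you say the compact support is used for the continuity of $\widehat F$, but, as you note later yourself, $F\in L^1(\R^n)$ already gives that continuity, so compact support is not needed anywhere in the argument.
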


\begin{proof}
	Write $x=y+se$, where $y\in e^\perp$ and $s\in\R$. This orthogonal change of variables has unit Jacobian, and
	\[
	\int_{e^\perp}|\Xray F(e,y)|\,dy\le\int_{e^\perp}\int_\R|F(y+se)|\,ds\,dy=\|F\|_{L^1(\R^n)}.
	\]
	Thus, Fubini's theorem applies. Since $\xi\cdot e=0$, one has $(y+se)\cdot\xi=y\cdot\xi$, and \eqref{eq:Fourier-slice} follows.
	
	If $\Xray F=0$, then $\widehat F(\xi)=0$ for every $\xi\ne0$ by choosing $e\perp\xi$. The continuity of $\widehat F$ also gives $\widehat F(0)=0$. The injectivity of the Fourier transform on $L^1(\R^n)$ yields $F=0$ almost everywhere.
\end{proof}

\begin{proposition}[Recovery of the X-ray transform]\label{prop:DN-to-Xray}
	Suppose that $\Lambda_q(t\ell_{e,b})$ is known for every $e\in\mathbb S^{n-1}$ and all sufficiently large $t$. Then the X-ray transform $\Xray Q(e,y)$ is determined for every $e\in\mathbb S^{n-1}$ and $y\in e^\perp$ by \eqref{eq:Xray-reconstruction-intro}.
\end{proposition}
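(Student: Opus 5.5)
The plan is to chain together results already established, checking that every quantity in \eqref{eq:Xray-reconstruction-intro} is computable from the given measurements alone. Fix $e\in\mathbb S^{n-1}$. Since the data $\Lambda_q(t\ell_{e,b})(x)$ are known for all large $t$ and every $x\in\partial\Omega$, and $\nu$ and $e\cdot\nu$ depend only on $\partial\Omega$, the quantity $t[\Lambda_q(t\ell_{e,b})(x)-t\,e\cdot\nu(x)]$ is known. By Theorem~\ref{thm:asymp-intro} (equivalently Corollary~\ref{cor:H-coefficient}) its limit as $t\to\infty$ exists for every $x\in\Gamma_e$, so $H_q(e,b;x)$ is determined on $\Gamma_e$. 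Note that only the behavior along an unbounded set of $t$ is used.

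Next I would check that $I_q(e,b;y)$ is determined for every $y\in e^\perp$. The chord geometry depends only on $\Omega$ and $e$: the set $Y_e$, the functions $\alpha_e,\beta_e$, and the endpoints $x_e^\pm(y)$. By Lemma~\ref{lem:Gamma-chords}, both endpoints $x_e^\pm(y)$ lie in $\Gamma_e$ whenever $y\in Y_e$. Hence $H_q(e,b;x_e^\pm(y))$ are known values, and the definition \eqref{eq:I-intro} gives $I_q(e,b;y)$ for $y\in Y_e$; for $y\notin Y_e$ it is $0$ by definition. The same argument applied to $-e$ yields $I_q(-e,b;y)$. Here one uses $P_{-e}=P_e$, so that $Y_{-e}=Y_e$ and the transverse parameter is unchanged.

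Finally, for $y\in Y_e$, Lemma~\ref{lem:remove-weight} shows that $(I_q(e,b;y)+I_q(-e,b;y))/(2b)$ equals the chord integral of $q$, which is $\Xray Q(e,y)$. For $y\notin Y_e$, both sides of \eqref{eq:Xray-reconstruction-intro} vanish: the line $y+\R e$ does not meet $\Omega$, including the case $y\in\partial Y_e$ where it may touch a flat boundary portion, as discussed before Lemma~\ref{lem:Fourier-slice}. This proves the proposition.

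The only point requiring care is to ensure the formula invokes $H_q$ exclusively at non-glancing points, where the limit is guaranteed to exist. Lemma~\ref{lem:Gamma-chords} handles this for $y\in Y_e$, and the case $y\in\partial Y_e$ is covered by the convention $I_q=0$. I do not expect a genuine obstacle here.
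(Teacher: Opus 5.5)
Your proposal is correct and follows essentially the same route as the paper. You extract $H_q$ on $\Gamma_e$ from Theorem~\ref{thm:asymp-intro}, use Lemma~\ref{lem:Gamma-chords} to place both chord endpoints in $\Gamma_e$ so that $I_q(\pm e,b;y)$ is computable, apply Lemma~\ref{lem:remove-weight} for $y\in Y_e$, and note that both sides vanish for $y\notin Y_e$. Your extra remarks, that $Y_{-e}=Y_e$ and that the case $y\in\partial Y_e$ falls under the convention $I_q=0$, just make explicit what the paper handles in the discussion preceding Lemma~\ref{lem:Fourier-slice}.
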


\begin{proof}
	Theorem~\ref{thm:asymp-intro} determines $H_q(e,b;x)$ on $\Gamma_e$. If $y\in Y_e$, both endpoints of the corresponding chord belong to $\Gamma_e$, and hence, the measurements determine $I_q(e,b;y)$ and $I_q(-e,b;y)$. Lemma~\ref{lem:remove-weight} gives $\Xray Q(e,y)$. If $y\notin Y_e$, then $\Xray Q(e,y)=0$.
\end{proof}

\begin{proof}[Proof of Corollary~\ref{cor:reconstruction-intro}]
	Formula \eqref{eq:Xray-reconstruction-intro} follows from Proposition~\ref{prop:DN-to-Xray}. Let $\xi\ne0$ and choose $e_\xi\in\mathbb S^{n-1}$ such that $e_\xi\cdot\xi=0$. Substituting \eqref{eq:Xray-reconstruction-intro} into the Fourier slice identity \eqref{eq:Fourier-slice} gives
	\[
	\widehat Q(\xi)=\frac1{2b}\int_{e_\xi^\perp}e^{-\mathsf i y\cdot\xi}\left[I_q(e_\xi,b;y)+I_q(-e_\xi,b;y)\right]\,dy.
	\]
	The right-hand side is independent of the choice of $e_\xi$, since it equals $\widehat Q(\xi)$. The value $\widehat Q(0)$ is determined by continuity of the Fourier transform of the compactly supported $L^1$ function $Q$. Fourier inversion recovers $Q$ in $\mathcal S'(\R^n)$. Since $Q=q$ almost everywhere in $\Omega$ and $q$ is continuous, this determines $q$ pointwise in $\Omega$.
\end{proof}

\begin{proof}[Proof of Theorem~\ref{thm:main}]
	Let $Q_j=q_j\mathbf 1_\Omega$. Fix $e\in\mathbb S^{n-1}$ and $x\in\Gamma_e$. The assumed equality of the measurements gives
	\[
	t\left[\Lambda_{q_1}(t\ell_{e,b})(x)-t\,e\cdot\nu(x)\right]=t\left[\Lambda_{q_2}(t\ell_{e,b})(x)-t\,e\cdot\nu(x)\right]
	\]
	for all sufficiently large $t$. Letting $t\to\infty$ gives $H_{q_1}(e,b;x)=H_{q_2}(e,b;x)$. The same conclusion holds in the direction $-e$. Hence, $I_{q_1}(\pm e,b;y)=I_{q_2}(\pm e,b;y)$ for every $y\in Y_e$, and Lemma~\ref{lem:remove-weight} gives $\Xray Q_1(e,y)=\Xray Q_2(e,y)$. Both transforms vanish for $y\notin Y_e$. Thus, $\Xray(Q_1-Q_2)=0$, and Lemma~\ref{lem:Fourier-slice} gives $Q_1=Q_2$ almost everywhere. Since $q_1$ and $q_2$ are continuous, they agree pointwise in $\Omega$.
\end{proof}

\begin{remark}[A weaker asymptotic hypothesis]\label{rem:weaker-data}
	The proof only uses equality of the coefficients $H_{q_1}$ and $H_{q_2}$. Therefore, it is enough to assume that for every $e\in\mathbb S^{n-1}$ and every $x\in\Gamma_e$, there is a sequence $t_k\to\infty$ such that
	\[
	t_k\left[\Lambda_{q_1}(t_k\ell_{e,b})(x)-\Lambda_{q_2}(t_k\ell_{e,b})(x)\right]\to0.
	\]
\end{remark}

\begin{remark}[Reconstruction]\label{rem:constructive}
	The reconstruction proceeds by extracting $H_q$ from the large amplitude measurements, forming the endpoint quantity $I_q$, combining the directions $e$ and $-e$, and applying the inverse Fourier transform to the recovered X-ray data.
\end{remark}

\section*{Statements and declarations}

\para{Data availability statement} No datasets were generated or analyzed during the current study.

\para{Conflict of interest} The author declares no conflict of interest.

\para{Acknowledgments} The author is partially supported by the National Science and Technology Council (NSTC), Taiwan, under the project 113-2115-M-A49-017-MY3. The author also acknowledges financial support from the Alexander von Humboldt Foundation through the Henriette Herz Scouting Programme, hosted by Universit\"at Duisburg-Essen, Germany.

\bibliographystyle{alpha}
\bibliography{refs}

\end{document}